\numberwithin{equation}{section}
\newtheorem{theorem}{Theorem}[section]
\newtheorem{proposition}[theorem]{Proposition}
\newtheorem{lemma}[theorem]{Lemma}
\newcommand{\D}{\displaystyle}
\theoremstyle{definition}
\def\XXint#1#2#3{{\setbox0=\hbox{$#1{#2#3}{\int}$}
     \vcenter{\hbox{$#2#3$}}\kern-.5\wd0}}
\begin{document}
\title[Uniqueness of a non-local semilinear elliptic equation]{Uniqueness and convergence on equilibria of the Keller-Segel system with subcritical mass}
\author{Jun Wang}
\address{ Jun ~Wang,~Faculty of Science, Jiangsu University, Zhenjiang, Jiangsu, 212013, P.R. China}
\email{wangmath2011@126.com}
\author{Zhi-An Wang}
\address{ Zhi-An Wang,~Department of Applied Mathematics, Hong Kong Polytechnic University, Hung Hom, Kowloon, Hong Kong}
\email{mawza@polyu.edu.hk}
\author{ Wen Yang}
\address{ Wen ~Yang,~Wuhan Institute of Physics and Mathematics, Chinese Academy of Sciences, P.O. Box 71010, Wuhan 430071, P. R. China}
\email{wyang@wipm.ac.cn}

\date{}
\begin{abstract}
This paper is concerned with the uniqueness of solutions to the following nonlocal semi-linear elliptic equation
\begin{equation}\label{ellip}\tag{$\ast$}
\Delta u-\beta u+\lambda\frac{e^u}{\int_{\Omega}e^u}=0~\mathrm{in}~\Omega,
\end{equation}
where $\Omega$ is a bounded domain in $\mathbb{R}^2$ and $\beta, \lambda$ are positive parameters. The above equation arises as the stationary problem of the well-known classical Keller-Segel model describing chemotaxis. \textcolor{black}{For equation \eqref{ellip} with Neumann boundary condition, we establish an integral inequality and prove that the solution of (\ref{ellip})  is unique if $0<\lambda \leq 8\pi$ and $u$ satisfies some symmetric properties. While for \eqref{ellip} with Dirichlet boundary condition, the same uniqueness result is obtained without symmetric condition by a different approach inspired by some recent works \cite{gm0,gm}.} As an application of \textcolor{black}{the uniqueness} results, we prove that the radially symmetric solution of the classical Keller-Segel system with subcritical mass subject to Neumann boundary conditions will converge to the unique constant equilibrium as time tends to infinity if $\Omega$ is a disc in two dimensions. As far as we know, this is the first result that asserts the exact asymptotic behavior of solutions to the classical Keller-Segel system with subcritical mass in two dimensions.
\end{abstract}
\maketitle

\section{Introduction}
This paper is concerned with a system of partial differential equations modeling chemotaxis which refers to the active motion of biological species towards higher concentrations of chemical substances that they emit themselves. The classical chemotaxis model is well-known as the Keller-Segel (KS) system \cite{ks, ks1}, reading as
\begin{equation}
\label{1.1}
\begin{cases}
v_t=\nabla\cdot(\nabla v-v\nabla u)~&\mbox{in}~\Omega,\\
u_t=\Delta u-\beta u+v~&\mbox{in}~\Omega,\\
\partial_{\nu}v=\partial_{\nu}u=0~&\mbox{on}~\partial\Omega,\\
v(x,0)=v_0(x),\quad u(x,0)=u_0(x)~&\mbox{in}~\Omega,
\end{cases}
\end{equation}
where $\Omega$ is smooth bounded domain in $\mathbb{R}^N(N\geq 2)$, $v(x,t)$ and $u(x,t)$ denote the cell density and chemical concentration, respectively;  $\beta$ is a positive constant accounting for the chemical death rate, $\nu$ is the unit outward normal vector at the boundary $\partial \Omega$.

The underlying equations in system \eqref{1.1} were proposed by Keller and Segel in 1970 \cite{ks, ks1} to describe the aggregation phase of cellular slime molds {\it Dictyostelium discoideum} in response to the chemical substance Cyclic adenosine monophosphate (cAMP) that they secreted. An immediate result derived from (\ref{1.1}) is the mass conservation for $v(x,t)$ by integrating the first equation
\begin{equation*}
\lambda=: \int_{\Omega}v(x,t)dx=\int_{\Omega}v_0(x)dx.
\end{equation*}
Among other things, the most striking feature of the KS system (\ref{1.1}) lies in the existence of critical space dimension and critical mass.  Roughly speaking, in one-dimensional space ($N=1$), the KS system (\ref{1.1}) admits globally bounded classical solutions \cite{Hillen-Potapov, Osaki}. In the two dimensional radially symmetric domain (disc),  global bounded classical solutions exist (cf. \cite{nsy}) if $\lambda<8\pi$ (subcritical mass), whereas solutions may blow up in finite or infinite time (cf. \cite{Herrero1, Horstmann-Wang}) if $\lambda>8\pi$ (super-critical mass), where the critical mass $8\pi$  becomes $4\pi$ if $\Omega$ is a general domain without symmetry. In three dimensions, the solution may blow up in finite time for any mass $\lambda>0$ (cf. \cite{Winkler1}). Therefore $N=2$ is a borderline space dimension where a critical mass exists, and as such the KS model (\ref{1.1}) and its variants (extensions) have attracted extensive attentions and a vast number of fruitful results have been obtained (cf. review papers \cite{Bellomo, H-review, HP-review} and book \cite{Perthame-book}). However, as far as we know,  the  long-time behavior of solutions to the Keller-Segel system (\ref{1.1}) with subcritical mass in two dimensions still remains unknown. It is the purpose of this paper to explore this open question. Precisely we shall show that the radial solution of (\ref{1.1}) in a disc with subcritical mass will converge to the unique constant equilibrium as time tends to infinity. Since it has been shown in \cite{fp} that the globally bounded solution (if it exists) of (\ref{1.1}) converges to the steady states in $L^\infty$-norm, our question boils down to prove the uniqueness of constant equilibrium for the stationary problem of (\ref{1.1}) in $\Omega \subset \mathbb{R}^2$:
\begin{equation}\label{1.1s}
\begin{cases}
\nabla\cdot(\nabla v-v\nabla u)=0~&\mbox{in}~\Omega,\\
\Delta u-\beta u+v=0~&\mbox{in}~\Omega,\\
\partial_{\nu}v=\partial_{\nu}u=0~&\mbox{on}~\partial\Omega,
\end{cases}
\end{equation}
in the case of subcritical mass $\int_\Omega v(x)dx<8\pi$.
To study the stationary system \eqref{1.1s}, we note that the first equation can be written as
\begin{equation*}
\nabla\cdot(v\nabla(\log v- u))=0.
\end{equation*}
Testing the above equation against $\log v-u$, then an integration by parts shows that any solution of \eqref{1.1s} verifies the equation
\begin{equation*}
\begin{aligned}
\int_{\Omega} v|\nabla(\log v-u)|^2dx=0,
\end{aligned}
\end{equation*}
so that $v=Ce^{u}$ for some positive constant $C$. Hence, all the solutions $(v,u)$ of \eqref{1.1s} would satisfy the relation $v=Ce^{u}$. Denoting
$$\lambda =\int_\Omega v(x)dx,$$
we have $C=\frac{\lambda}{\int_{\Omega}e^{u}dx}.$ Therefore, the stationary problem (\ref{1.1s}) is equivalent  to the following nonlocal semi-linear Neumman problem
\begin{equation}
\label{1.6}
\begin{cases}
\Delta u-\beta u+\D\lambda\frac{e^u}{\int_{\Omega}e^{u}dx}=0~&\mathrm{in}~\Omega,\\
\partial_{\nu}u=0~&\mathrm{on}~\partial\Omega,
\end{cases}
\end{equation}
and $v=\frac{e^u}{\int_\Omega e^udx}$.
Integrating the first equation of (\ref{1.6}), one immediately obtains that
$$\int_\Omega u(x)dx=\frac{\lambda}{\beta}.$$
Then by a (new) shifted variable
\begin{equation}
\label{1.uv}
U(x)=u(x)-\bar u, \quad \bar u=\frac{1}{|\Omega|}\int_{\Omega}u(x)dx=\frac{\lambda}{\beta |\Omega|},
\end{equation}
the problem (\ref{1.6}) can be transformed as the following

\begin{equation}
\label{1.5}
\begin{cases}
\Delta U-\beta U+\lambda\Big(\frac{e^{U}}{\int_{\Omega}e^{U}dx}-\frac{1}{|\Omega|}\Big)=0~&\mathrm{in}~\Omega,\\
\partial_{\nu}U=0~&\mathrm{on}~\partial\Omega.
\end{cases}
\end{equation}
From \eqref{1.uv} we know that the mean of $U$ is zero, namely
\begin{equation}
\label{1.int-v}
\int_{\Omega}U(x)dx=0.
\end{equation}
When $\beta=0$, the nonlinear differential equation in \eqref{1.5} is closely related to the Gaussian curvature problem on a surface (see \cite{m}). In Onsager's vortex theory the asymptotic limit of the Gibbs measure yields to similar problems (see \cite{clmp,ck,k}). Moreover the first equation of \eqref{1.5} on a torus arises in the Chern-Simons gauge theory \cite{t} and has been investigated among others by Struwe-Tarantello \cite{st}. Chen-Lin \cite{cl} and Machioldi \cite{m1} have independently derived the Leray-Schauder Topological degree for \eqref{1.5} on the Riemann surface without boundary.
By assuming $\beta>\frac{\lambda}{|\Omega|}-\lambda_1$ and $\lambda>4\pi,$ where $\lambda_1$ is referred to the first eigenvalue of the Neumann eigenvalue problem, Wang-Wei \cite{ww} and Horstmann \cite{h} have independently shown the existence of non-constant solutions for \eqref{1.5}. Very recently, Battaglia obtains the existence of non-constant solutions to \eqref{1.5} with $(\lambda,\beta)$ in a wider range, see \cite[Theorem 1.1]{bluca} for the details.

Since the Neumann problem \eqref{1.5}-\eqref{1.int-v} admits a trivial solution $U=0$, it is natural to ask whether there is any other solution. When $\beta=0$, a simple application of the maximum principle will show that $U\equiv 0$ is the unique solution to problem \eqref{1.5}-\eqref{1.int-v} provided that $\lambda\leq 0$ though it is not of interest in applications. For $\lambda>0$, the first equation of \eqref{1.5} (without Neumann boundary condition) on a standard sphere admits only the constant solution whenever $\lambda<8\pi$ (cf. \cite{ck,cslin,o}), while the uniqueness result also holds for some flat torus provided $\lambda\leq8\pi$ (cf. \cite{ll}). When $\Omega$ is the unit disc, Horstmann-Lucia \cite{hl} proved that $U\equiv0$ is the unique solution for Neumann problem \eqref{1.5}-\eqref{1.int-v} if  $\lambda\leq\frac{32}{\pi}$. The uniqueness result also holds when the solution is constant on the boundary (i.e. $\mathrm{osc}_{\partial\Omega}(u)\equiv0$) and $\lambda\leq8\pi$. As a consequence of their results, we may \textcolor{black}{expect to} conclude that $U\equiv0$ is the unique radially symmetric solution to the Neumann problem \eqref{1.5}-\eqref{1.int-v} if $\lambda\leq8\pi$ and \textcolor{black}{$\Omega$ is the unit disc.}

We remark that the above results for the Neumann problem (\ref{1.5}) with $\beta=0$ can not be converted to the Neumann problem (\ref{1.1s}) since the transformation (\ref{1.uv}) no longer works for $\beta=0$.
Indeed, if $\beta=0$, the integration of the second equation of (\ref{1.1s}) along with the Neumman boundary condition yields that $\int_\Omega vdx=0$, which \textcolor{black}{along with the fact $v\geq 0$} indicates that (\ref{1.1s}) with $\beta=0$ has no solution if $\lambda>0$ \textcolor{black}{or} $u\equiv0$ if $\lambda=0$. Hence the solution for the case $\beta=0$ is clear but not of interest. To the best of our knowledge, there are very few results available for the case $\beta>0$ for which the shifted problem (\ref{1.5}) is equivalent to (\ref{1.6}) under the shifting (\ref{1.uv}). In this paper, we shall investigate the uniqueness of solutions to \eqref{1.6} with $\beta>0$ which is equivalent to (\ref{1.1s}) with $v=\frac{e^u}{\int_\Omega e^udx}$. Hereafter, we shall assume $\beta>0$ unless otherwise stated. When $\Omega$ is the unit disc, Senba-Suzuki \cite[Theorem 4]{ss} have obtained the existence of nontrivial radial solutions of \eqref{1.6} for $\lambda>8\pi$. In this paper, we shall prove that the constant equilibrium $u=\frac{\lambda}{\beta\pi}$ is the only radially symmetric solution to the Neumann problem \eqref{1.6} if $\Omega$ is the unit disc and $\lambda\leq 8\pi$.
Using the maximum principle, one can get $u>0$ in $\Omega$ and we leave the proof in the appendix (see Lemma \ref{le6.1}). Therefore our study will be focused on the positive solutions of \eqref{1.6}.\\

Our first result concerning the Neumann problem \eqref{1.6} is the following:
\begin{theorem}
\label{th1.1}
Let $\Omega$ be a non-empty bounded open set in $\mathbb{R}^2$. Then for $\lambda\leq 8\pi$ the constant $u=\frac{\lambda}{\beta|\Omega|}$ is the unique solution to the problem \eqref{1.6} with $\mathrm{osc}_{\partial\Omega}(u)\equiv0$.
\end{theorem}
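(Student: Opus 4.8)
The plan is to pass to the shifted, mean-zero formulation \eqref{1.5}: writing $U=u-\frac{\lambda}{\beta|\Omega|}$, it suffices to show that the only solution of \eqref{1.5} with $\int_\Omega U\,dx=0$ and $U$ constant on $\partial\Omega$ is $U\equiv0$. The hypothesis $\mathrm{osc}_{\partial\Omega}(u)\equiv0$ says that the tangential derivative of $U$ vanishes on $\partial\Omega$, and together with the Neumann condition $\partial_\nu U=0$ this forces $\nabla U=0$ on $\partial\Omega$; thus $\partial\Omega$ is a critical level set of $U$. This boundary rigidity is exactly what will let a level-set/isoperimetric argument run cleanly up to $\partial\Omega$, and it is indispensable: on a general domain nonconstant solutions are known to exist once $\lambda>4\pi$, so the boundary condition must enter the estimate in an essential way.

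First I would record the basic integral identity. Multiplying the equation in \eqref{1.5} by $U$, integrating over $\Omega$, integrating by parts using $\partial_\nu U=0$, and using $\int_\Omega U\,dx=0$ to discard the $\frac{1}{|\Omega|}$ term, one obtains
\begin{equation*}
\int_\Omega|\nabla U|^2\,dx+\beta\int_\Omega U^2\,dx=\lambda\,\frac{\int_\Omega U e^{U}\,dx}{\int_\Omega e^{U}\,dx}.
\end{equation*}
Since both factors of $U(e^U-1)$ share the same sign and $\int_\Omega U\,dx=0$, the right-hand side equals $\lambda\int_\Omega U(e^U-1)\,dx\big/\int_\Omega e^{U}\,dx\ge 0$, so this identity is an exact balance that every solution must satisfy.

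The heart of the argument is then to establish a matching sharp \emph{integral inequality}: for every mean-zero $U$ that is constant on $\partial\Omega$ and every $\lambda\le 8\pi$,
\begin{equation*}
\lambda\,\frac{\int_\Omega U e^{U}\,dx}{\int_\Omega e^{U}\,dx}\le\int_\Omega|\nabla U|^2\,dx+\beta\int_\Omega U^2\,dx,
\end{equation*}
with equality if and only if $U$ is constant, hence $U\equiv0$. Granting this, the identity above forces equality, and the rigidity statement yields $U\equiv0$; the threshold $\lambda=8\pi$ is sharp, since nonconstant radial solutions on a disc appear precisely for $\lambda>8\pi$. To prove the inequality I would argue through the super-level sets $\Omega_t=\{U>t\}$. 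Using the coarea formula to write $\int_\Omega|\nabla U|^2\,dx=\int_t\big(\int_{\{U=t\}}|\nabla U|\,ds\big)dt$, and the divergence theorem with the PDE to express $\int_{\{U=t\}}|\nabla U|\,ds=\int_{\Omega_t}\big[\beta U-\lambda\big(\tfrac{e^U}{\int_\Omega e^U}-\tfrac{1}{|\Omega|}\big)\big]dx$, one reduces everything to one-dimensional quantities in $t$. The crucial geometric input is an Alexandrov--Bol (isoperimetric) inequality for the conformal weight $e^{U}$, which is the mechanism by which the constant $8\pi$ enters: it bounds the length of $\{U=t\}$ in the metric $e^{U}|dx|^2$ from below in terms of the enclosed $e^{U}$-area and its isoperimetric deficit $\big(8\pi-\int_{\Omega_t}e^U\big)$. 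Feeding this into the coarea decomposition produces a first-order differential inequality for the rearranged mass and energy whose integration, with $\nabla U=0$ on $\partial\Omega$ supplying the boundary data, yields the displayed inequality; equality then propagates back to force the $e^U$-metric to have constant curvature on each super-level set, forcing $U$ constant.

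The main obstacle will be establishing this sharp inequality with the precise constant $8\pi$ and a clean equality characterization. Two points require care. First, the nonlinearity is not the pure Liouville term $e^U$: the extra $-\beta U$ (and the subtracted mean $\frac{1}{|\Omega|}$) must be carried through the level-set computation, which is where the nonnegative term $\beta\int_\Omega U^2\,dx$ in the inequality originates and which slightly complicates the monotonicity needed to integrate the differential inequality. Second, the Alexandrov--Bol inequality is cleanest on simply connected domains, so for a general bounded open $\Omega$ one must handle the level-set topology (nesting, critical values, possibly several components) carefully; the condition $\nabla U=0$ on $\partial\Omega$ is precisely what guarantees the boundary behaves like an ordinary critical level and keeps the boundary terms under control. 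Once the inequality and its rigidity are in hand, combining them with the integral identity closes the proof at once.
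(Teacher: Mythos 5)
Your overall strategy --- level sets, the coarea formula, and an isoperimetric input as the source of the constant $8\pi$ --- is in the same family as the paper's argument, and your preliminary observations (the energy identity, and the fact that $\mathrm{osc}_{\partial\Omega}(u)=0$ together with $\partial_\nu u=0$ makes $\partial\Omega$ behave like a critical level so that the level curves $\{u=t\}$ close up inside $\Omega$) are correct and are indeed used in the paper. However, the central reduction you propose is not justified and is almost certainly false as stated. You want the inequality
$\lambda\,\int_\Omega U e^{U}dx/\int_\Omega e^{U}dx\le\int_\Omega|\nabla U|^2dx+\beta\int_\Omega U^2dx$
to hold for \emph{every} mean-zero $U$ that is merely constant on $\partial\Omega$, for every bounded open $\Omega$ and every $\beta>0$. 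Testing with $U=sV$ and letting $s\to0$ shows this would force $\bigl(\tfrac{8\pi}{|\Omega|}-\beta\bigr)\int_\Omega V^2\le\int_\Omega|\nabla V|^2$ for all such $V$, a Poincar\'e-type bound with the specific constant $8\pi/|\Omega|-\beta$ that has no reason to hold on a general domain with $\beta$ small; nothing in your sketch supplies it. The theorem survives only because one works with \emph{solutions}, which satisfy the overdetermined boundary data and the PDE; your own derivation of the inequality invokes the PDE through the divergence theorem on $\{U>t\}$, so at best you would prove it for solutions --- at which point the ``identity plus matching functional inequality plus rigidity'' architecture collapses into a direct level-set estimate, which is what the paper actually does, but with different and fully specified ingredients.

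Concretely, the paper's mechanism for $8\pi$ is not the Alexandrov--Bol inequality for the conformal metric $e^{U}|dx|^2$ (that tool is reserved for the Dirichlet problem in Section 5). Instead, one derives the differential inequalities of Proposition \ref{pr2.1} for $\Psi=A_gf\mu^2-F^2$ and $\tilde\Psi=A_gf\tilde\mu^2-\tilde F^2$, integrates them into the inequality \eqref{3.11}, and then applies the plain Euclidean isoperimetric inequality $[\mathcal{H}^1(\{u=t\})]^2\ge4\pi\min\{\mu(t),|\Omega|-\mu(t)\}$ to the interior level curves, using $f'=f$ for the exponential nonlinearity; the strict bound $4\pi/\max\{\mu,|\Omega|-\mu\}>4\pi/|\Omega|$ then gives $\beta\bar u>8\pi/|\Omega|$, i.e.\ $\lambda>8\pi$ for any non-constant solution, with no delicate equality analysis needed. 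Two technical points that your proposal omits entirely are exactly where the new difficulty lies: (i) the term $g(u)=\beta u$ is not constant, and the paper controls $\int_{\{u>t\}}g(u)\,dx$ from below by $A_g\mu(t)$ via the monotonicity of $g$ (Lemma \ref{le2.2}); and (ii) the functions $F,\mu$ jump across the finite set $\mathcal{C}(f,g)=\{t:f(t)=g(t)\}$, and only the \emph{sum} $\Psi+\tilde\Psi$ has nonnegative jumps there (the claim \eqref{3.2}), which is why the super- and sub-level set inequalities must be integrated together. Without an argument replacing these two steps, your sketch does not close.
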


\noindent {\bf Remark 1.1:} When $\Omega$ is the unit disc, we can obtain from Theorem \ref{th1.1} that $u=\frac{\lambda}{\beta\pi}$ is the only radially symmetric solution to the problem \eqref{1.6} provided $\lambda\leq 8\pi$.
\medskip


\textcolor{black}{Inspired by the result of Theorem \ref{th1.1}, one may ask what happens if the condition $\mathrm{osc}_{\partial\Omega}u\equiv 0$ is replaced by some symmetric condition, i.e., the solutions which are invariant under the group of isometries of a unit disc. To state the results, we introduce the following classes of functions:}
\begin{equation*}
H^G:=\{w\in H^1(B):w=w\circ g,~\forall g\in G\}.
\end{equation*}
and
\begin{equation*}
\mathring{H}^G:=\{w\in H^G:\int_\Omega w=0\},
\end{equation*}
where $B$ denotes the unit disc. For any $\theta\in(0,2\pi)$ and $C\neq 0$ we use the notations
\begin{equation*}
R_{\theta}:=~\mbox{rotation of angle}~\theta,
\end{equation*}
\begin{equation*}
D_C:=~\mbox{reflection with respect to}~\overrightarrow{OC}~(O~\mbox{stands for the orgin point}),
\end{equation*}
and $\langle g\rangle$ stands for the subgroup generated by an isometry $g$. We list the following examples which are used usually:
\begin{enumerate}
  \item [(a)] When $G=SO(2)=\left\{\left(\begin{matrix}\cos\theta&\sin\theta\\-\sin\theta&\cos\theta\end{matrix}\right),\theta\in[0,2\pi)\right\}$, the space $H^G$ consists of the radial $H^1(B)$-functions.
  \item [(b)] $H^{\langle R_{\pi}\rangle}$ consists of the $H^1(B)$-functions satisfying $w(x)=w(-x)$.
  \item [(c)] Let $D_m:\langle R_{2\pi/m},D_C\rangle$ be a group generated by the rotation $R_{2\pi/m}$ and a reflection $D_C$. The group with $2m$ elements is called the $m-$th {\em dihedral} group. For $m\geq3$, it is the symmetry group of regular $m$-polygon.
\end{enumerate}

For  solutions of \eqref{1.6} which are invariant under rotations, we have:
\begin{theorem}
\label{th.rotation}
Let $G=\langle R_{2\pi/m}\rangle,~m\in\mathbb{N}$ with $m\geq 2$. If a non-constant function $u\in H^G$ solves the problem \eqref{1.6} in $B$, then
\begin{equation}
\label{1.12}
\lambda>\Lambda_m=\begin{cases}
\frac{64}{\pi},~&\mbox{if}~m=2,\\
8\pi,~&\mbox{if}~m\geq3.
\end{cases}
\end{equation}
\end{theorem}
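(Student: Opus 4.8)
The plan is to argue directly: assume $u\in H^G$ is a nonconstant solution of \eqref{1.6} on $B$ and deduce $\lambda>\Lambda_m$. Passing to the shifted unknown $U=u-\bar u$, we have $U\in\mathring H^G$, $U\not\equiv0$, and $U$ solves \eqref{1.5}. Testing \eqref{1.5} against $U$ and using $\int_BU=0$ together with the Neumann condition records the first identity
\begin{equation*}
\int_B|\nabla U|^2+\beta\int_BU^2=\lambda\,\frac{\int_BUe^{U}}{\int_Be^{U}},
\end{equation*}
which already shows that a nontrivial $U$ forces the mean-field nonlinearity on the right to be sizeable, and in which $\beta>0$ appears with a favorable sign — consistent with $\Lambda_m$ being independent of $\beta$.

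The engine of the argument is an integral inequality of isoperimetric (Bol--Alexandrov) type for the conformal density $e^{U}$, applied along the super-level sets $\Omega_t=\{x\in B:U(x)>t\}$. Coupling the co-area formula with the isoperimetric inequality for $\Omega_t$ yields a differential inequality for the mass $t\mapsto\int_{\Omega_t}e^{U}$, whose integration up to $\max_B U$ bounds the admissible mass from above and hence bounds $\lambda$. For a general function on the disc this is exactly the mechanism behind the threshold $\tfrac{32}{\pi}$ of Horstmann--Lucia \cite{hl} and behind the value $8\pi$ of Theorem \ref{th1.1}. The new ingredient here is that $G=\langle R_{2\pi/m}\rangle$-invariance of $U$ makes each $\Omega_t$ invariant under the rotation $R_{2\pi/m}$: whenever such a level set avoids the common fixed point $O$ of $G$, its $m$-fold symmetry improves the relevant (relative) isoperimetric constant by the factor $m$ — in the prototypical case of $m$ congruent components one has $L=m\ell$, $A=ma$ with $\ell^2\ge4\pi a$, giving $L^2\ge4\pi m A$. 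Feeding this improved constant into the Bol estimate multiplies the admissible off-center mass by $m$, raising the base value $\tfrac{32}{\pi}$ to $\tfrac{32m}{\pi}$.

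The single configuration the symmetry cannot improve is a density centered at $O$ itself: there the top level sets are single $R_{2\pi/m}$-invariant topological discs surrounding $O$, the isoperimetric constant stays $4\pi$, and the admissible mass is capped at $8\pi$ independently of $m$. Thus a nonconstant $U$ must attain its maximum either at $O$, in which case the unimproved estimate forces $\lambda>8\pi$, or on an off-center orbit of at least $m$ points, in which case the improved estimate forces $\lambda>\tfrac{32m}{\pi}$; in both cases
\begin{equation*}
\lambda>\min\Big\{\tfrac{32m}{\pi},\,8\pi\Big\}=\Lambda_m=\begin{cases}\tfrac{64}{\pi},&m=2,\\[1mm]8\pi,&m\ge3,\end{cases}
\end{equation*}
the off-center bound being the binding one only for $m=2$ and the center cap taking over for $m\ge3$. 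The borderline value $\lambda=\Lambda_m$ is disposed of through the equality case of the (symmetric) isoperimetric inequality, which would force the level sets to be exact discs and hence $U$ to be radial, whereupon the radial ODE analysis — or Theorem \ref{th1.1} via $\mathrm{osc}_{\partial B}(u)\equiv0$ — rules out the nonconstant candidate.

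I expect the main obstacle to be the sharp symmetric isoperimetric input and its interface with these two competing regimes. The delicate points are: (i) proving the correct $\mathbb{Z}_m$-improved isoperimetric inequality for the level sets and verifying that it enters the Bol-type differential inequality with exactly the factor $m$; (ii) cleanly separating the center contribution (which keeps the hard cap $8\pi$ and produces the plateau for $m\ge3$) from the off-center contribution (which produces the growing bound $\tfrac{32m}{\pi}$), including the level sets that are in transition between surrounding and not surrounding $O$; and (iii) absorbing the lower-order term $-\beta U$ and the Neumann boundary contributions into the level-set analysis without spoiling the sign in the Bol inequality, a step in which the favorable sign of $\beta>0$ in the identity above should help rather than hinder.
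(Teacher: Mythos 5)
Your overall strategy is the one the paper uses: a level-set/co-area argument producing an integral inequality, with the isoperimetric constant improved by the $\langle R_{2\pi/m}\rangle$-invariance of the level sets, and the final threshold $\Lambda_m=\min\{\tfrac{32m}{\pi},8\pi\}$. But as written the proposal has two genuine gaps.

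First, the "engine" is asserted rather than established, and the real difficulty there is not the one you flag. The obstruction to running the Horstmann--Lucia/Lucia level-set argument for \eqref{1.6} is that the absorption term $g(u)=\beta u$ is a \emph{non-constant} function of $u$: when you integrate the equation over $\{u>t\}$ you pick up $\int_{\{u>t\}}\beta u\,dx$, which must be compared with $A_g\,\mu(t)$ (this is Lemma \ref{le2.2}, using only monotonicity of $g$), and the auxiliary functions $\Psi=A_gf\mu^2-F^2$ and $\tilde\Psi=A_gf\tilde\mu^2-\tilde F^2$ are discontinuous at the finitely many levels where $f(t)=g(t)$ (where $\{u=t\}$ may have positive measure); the paper must add the two jump contributions together to see that their sum is nonnegative, as in \eqref{3.2}, before integrating the differential inequality to get Proposition \ref{pr3.1} and its $G$-equivariant version, Proposition \ref{pr4.2}. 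Your proposed remedy --- that the favorable sign of $\beta$ in the energy identity $\int|\nabla U|^2+\beta\int U^2=\lambda\int Ue^U/\int e^U$ should absorb the lower-order term --- does not interact with the level-set computation at all and would not produce the needed inequality; that identity is never used in the proof.

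Second, the symmetric isoperimetric input is stated with an incorrect derivation. For a level set consisting of $m$ congruent components each touching $\partial B$, the components satisfy only the \emph{relative} isoperimetric inequality of the disc, $I_B(a)^2\ge\tfrac{16}{\pi|B|}a(|B|-a)$, not the absolute one $\ell^2\ge4\pi a$; your prototype computation $L=m\ell$, $\ell^2\ge 4\pi a$ would give the constant $8\pi m$, which is inconsistent with the $\tfrac{32m}{\pi}$ you (correctly) quote. The correct statement is Lemma \ref{le4.2}(b), $\mathcal{H}^1(\partial\omega\cap B)\ge m\,I_B(|\omega|/m)$, feeding into Lemma \ref{le4.3}. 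Relatedly, the dichotomy is not "maximum attained at $O$ versus on an off-center orbit": for a single solution, different levels $t$ can produce level sets of both types (compactly contained in $B$, giving the $4\pi$ constant, versus boundary-touching $m$-fold unions, giving the $\tfrac{16m}{\pi|B|}$ constant), so one needs the pointwise-in-$s$ bound $[I_B^G(s)]^2/(s(|B|-s))>\min\{\tfrac{4\pi}{|B|},\tfrac{16m}{\pi|B|}\}$ before integrating; the strictness of that bound is also what yields the strict inequality $\lambda>\Lambda_m$, with no separate borderline analysis needed.
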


\medskip

To prove Theorems \ref{th1.1}-\ref{th.rotation}, we first consider a more general problem
\begin{equation}
\label{1.fg}
\begin{cases}
\Delta u-g(u)+f(u)=0~&\mathrm{in}~\Omega,\\
u>0~&\mathrm{in}~\Omega,\\
\partial_{\nu}u=0~&\mathrm{on}~\partial\Omega,
\end{cases}
\end{equation}
where $f,g$ satisfy the following conditions:
\begin{equation}
\label{1.fga}
f,g\in C^2(\mathbb{R}),\quad f(t),f'(t)>0,~g(t),g'(t)\geq0~\mbox{for}~t\geq0.
\end{equation}
For equation \eqref{1.fg} with $f,g$ satisfying \eqref{1.fga}, we derive an integral inequality
\begin{equation}
\label{1.ine}
\frac{A_g}{2}\int_{-\infty}^{\infty}f'(t)\mu(t)(|\Omega|-\mu(t))dt\geq
\int_{-\infty}^{\infty}f(t)I_{\Omega}^2(\mu(t))dt,
\end{equation}
where $$A_g=\frac{1}{|\Omega|}\int_{\Omega}g(u)dx,\quad \mu(t)=|\{x\in\Omega:u(x)>t\}|$$
and $I_{\Omega}(s)$ is referred to the ``isoperimetric profile" of $\Omega$ with volume $s$, the detailed definition of the ``isoperimetric profile" will be given in Section 2. \textcolor{black}{In \cite{hl,l}, the authors study the problem \eqref{1.fg} with $g(u)$ replaced by a constant $A=\frac{1}{|\Omega|}\int_{\Omega}f(u)dx$. In their approach, a similar inequality of \eqref{1.ine} is obtained with $A_g$ replaced by $A$. Different from their problem, we assume that $g(u)$ is a non-constant function with some increasing property. To derive the inequality \eqref{1.ine}, we have to estimate the integration of $g(u)$ with respect to the level set of $u$. With a simple manipulation, see Lemma \ref{le2.2}, we manage to estimate the integration of $g(u)$ in terms of the mean value $A_g$. This is the key point which helps us to generalize all the results to $g(u)$ - a nontrivial function. Another new ingredient in our proof is that we find to study the augmented  functions $\Psi$ and $\tilde\Psi$ (see the definition of $\Psi$ and $\tilde\Psi$ in \eqref{3.psi}) together to show that the jump of each discontinuous point is positive, see \eqref{3.2}. It is different from the simpler case of constant function $g(u)$, where $\Psi$ and $\tilde\Psi$ \textcolor{black}{can be} studied separately (cf. \cite{hl,l}).}
\medskip

We shall apply our uniqueness results to exploit the asymptotic behavior of solutions to the Keller-Segel system \eqref{1.1}. Specifically we show that if $\lambda<\Lambda_m$ any rotationally invariant solution to \eqref{1.1} is uniformly bounded, exists globally and converges to the unique constant equilibrium.
\begin{theorem}
\label{th1.4}
Consider the problem \eqref{1.1} in the unit disc $B\subset\mathbb{R}^2$. Let $(v,u)$ be a $C^2$-solution to \eqref{1.1} belonging to $H^G\times\mathring{H}^G$ with $G=\langle R_{2\pi/m}\rangle~(m\geq2)$. If $\lambda<\Lambda_m$ with $\Lambda_m$ defined in \eqref{1.12}, then the solution of \eqref{1.1} is globally defined and
\begin{equation}\label{asy}
\lim_{t\to\infty}(v,u)(\cdot,t)=\left(\frac{\lambda}{\pi},\frac{\lambda}{\beta\pi}\right)\ \ \mbox{in}\ \ C^1(\overline B).
\end{equation}
In particular, if $\lambda<8\pi$, any radial solution $(v,u)$ of \eqref{1.1} will satisfiy \eqref{asy}.
\end{theorem}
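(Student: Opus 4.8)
The plan is to cast the statement as a dynamical-systems result: global boundedness places the trajectory in a compact attracting set, the convergence theory of \cite{fp} forces its $\omega$-limit set into the stationary states, and the uniqueness Theorem \ref{th.rotation} collapses the set of admissible (symmetric) stationary states to the single constant equilibrium. I would begin by checking that $\lambda<\Lambda_m$ is a subcritical-mass condition: for $m\geq 3$ one has $\Lambda_m=8\pi$ and for $m=2$ one has $\Lambda_2=64/\pi<8\pi$, so in every case $\lambda<\Lambda_m$ implies $\lambda<8\pi$. Invoking the global boundedness theory for subcritical mass in the disc \cite{nsy} --- where the $G$-symmetry rules out the low-mass boundary concentration and keeps the relevant blow-up threshold at $8\pi$ --- the solution $(v,u)$ exists for all $t>0$ and is bounded in $L^\infty(B)\times L^\infty(B)$ uniformly in time; standard parabolic Schauder estimates on unit time windows then bound it uniformly in $C^{2+\alpha}(\overline B)\times C^{2+\alpha}(\overline B)$, so the trajectory $\{(v,u)(\cdot,t)\}_{t\geq 1}$ is relatively compact in $C^1(\overline B)\times C^1(\overline B)$.

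Next I would exploit the rotational equivariance of \eqref{1.1}: since every operator and boundary condition commutes with the $G$-action and the data lie in the $G$-symmetric class, uniqueness for the parabolic problem guarantees that $(v,u)(\cdot,t)$ stays $G$-symmetric for all $t$. Hence the $\omega$-limit set $\omega(v,u)$, which is nonempty, compact and connected by the relative compactness above, consists entirely of $G$-symmetric functions. By \cite{fp} every globally bounded solution converges to the set of stationary solutions, so each element of $\omega(v,u)$ solves the stationary system \eqref{1.1s}; recalling from the introduction that \eqref{1.1s} is equivalent to \eqref{1.6} through $v=e^{u}/\int_B e^{u}\,dx$, each limit profile produces a $G$-symmetric solution $u_\infty\in H^G$ of \eqref{1.6}.

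I would then apply Theorem \ref{th.rotation}: any \emph{non-constant} $G$-symmetric solution of \eqref{1.6} requires $\lambda>\Lambda_m$, which contradicts the hypothesis. Thus the only possible limit profile is the constant, which by $\int_B u=\lambda/\beta$ and $|B|=\pi$ must equal $u_\infty\equiv\lambda/(\beta\pi)$, with associated $v_\infty\equiv\lambda/\pi$. Consequently $\omega(v,u)$ is the single point $(\lambda/\pi,\lambda/(\beta\pi))$; combined with the relative compactness of the trajectory this upgrades subsequential convergence to full convergence in $C^1(\overline B)$, yielding \eqref{asy}. The final radial assertion follows the same way with $G=SO(2)$, using Theorem \ref{th1.1} and Remark 1.1 (uniqueness for $\lambda\leq 8\pi$) in place of Theorem \ref{th.rotation}.

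The hard part will not be the algebraic identification of the limit --- Theorem \ref{th.rotation} does that work --- but rather the dynamical bookkeeping needed to pass from ``convergence to the set of equilibria'' to convergence of the entire trajectory to one point. Concretely, I expect the delicate steps to be (i) securing the uniform-in-time higher-order bounds and relative $C^1$-compactness that make $\omega(v,u)$ nonempty, compact and connected, and (ii) justifying that the $G$-symmetry is preserved in the limit so that Theorem \ref{th.rotation} applies; once $\omega(v,u)$ is known to be connected and contained in the singleton of symmetric equilibria, full convergence is automatic. A secondary technical point is confirming that the symmetry genuinely raises the global-existence threshold to $8\pi$ for $m\geq 2$, so that the whole range $\lambda<\Lambda_m$ indeed lies in the globally-bounded regime.
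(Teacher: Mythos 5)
Your overall strategy---uniform boundedness and global existence, convergence to the set of equilibria via \cite{fp}, and identification of the limit through Theorem \ref{th.rotation} (resp.\ Theorem \ref{th1.1} and Remark 1.1 in the radial case)---is exactly the route the paper takes. The genuine gap is in your very first step. You invoke ``the global boundedness theory for subcritical mass in the disc \cite{nsy}'' and defer to a ``secondary technical point'' the claim that the symmetry raises the blow-up threshold to $8\pi$; but this is precisely where the paper does its real work. The result of \cite{nsy} gives the $8\pi$ threshold only for \emph{radial} solutions in a disc; for general solutions the threshold is $4\pi$, and the class $H^{\langle R_{2\pi/m}\rangle}$ is strictly larger than the radial class. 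Since $\Lambda_m>4\pi$ for every $m\geq 2$ (including $\Lambda_2=64/\pi$), the range $4\pi\leq\lambda<\Lambda_m$ is not covered by any off-the-shelf global existence result, so your opening assertion is unsupported exactly where it is needed.

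The paper fills this hole with Proposition \ref{pr4.3}: the Lyapunov functional $J_\lambda$ restricted to $H^G$ is bounded from below for all $\lambda\leq 8\pi$. This is proved by decomposing the disc into $m$ (or, after Steiner symmetrization for $m=2,3$, into $2m$) copies of a fundamental sector $\Pi$ and applying the Moser--Trudinger inequality of Chang--Yang and Cianchi with the corner angle $\theta_\Pi$ on that sector, which converts the symmetry into the improved constant. It is this lower bound, fed into the iteration of \cite[Theorem 1.1]{nsy}, that yields global existence and the uniform $L^\infty$ bound in the symmetric class. Your intuition that ``symmetry rules out low-mass boundary concentration'' is the right heuristic, but without the fundamental-domain Moser--Trudinger argument the proof does not close. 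The remainder of your argument is sound and matches the paper; note only that \cite[Theorem 1.1]{fp} already gives convergence of the whole trajectory to a single equilibrium in $C^1(\overline{B})$, so the $\omega$-limit-set connectedness bookkeeping you flag as ``the hard part'' is not actually required.
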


We remark that the convergence of solutions \eqref{1.1} to the constant equilibrium as time tends to infinity in the critical case $\lambda=8 \pi$ remains unknown. Indeed the asymptotic behavior of solutions to the classical Keller-Segel system in the case of critical mass is a rather complicated issue and the answer was only partially given in the whole space $\mathbb{R}^2$ for the case $\beta=0$ (cf. \cite{Lopez1, Lopez2}). Although the asymptotics of solutions for critical mass in a bounded domain still remains unknown in this paper, our results in Theorem \ref{th1.1} and Theorem \ref{th.rotation} will shed lights on this problem for further pursues in the future. \\


The last problem to be considered in this paper is the uniqueness of solutions for the following Dirichlet problem:
\begin{equation}
\label{1.dirichlet}
\begin{cases}
\Delta u-\beta u+\lambda\frac{e^{u}}{\int_{\Omega}e^{u}dx}=0~&\mbox{in}~\Omega,\\
u=0~&\mbox{on}~\partial\Omega.
\end{cases}
\end{equation}
The above problem is the steady state problem of the Keller-Segel system with mixed zero-flux and Dirichelt boundary conditions
\begin{equation*}
\label{1.d.system}
\begin{cases}
v_t=\nabla\cdot(\nabla v-v\nabla u)~&\mbox{in}~\Omega,\\
u_t=\Delta u-\beta u+v~&\mbox{in}~\Omega,\\
\partial_{\nu}v-v\partial_{\nu}u=u=0~&\mbox{on}~\partial\Omega,\\
u(x,0)=u_0(x),\quad v(x,0)=v_0(x)~&\mbox{in}~\Omega.
\end{cases}
\end{equation*}
When $\beta=0$, Suzuki \cite{s} proved that if $\Omega$ is simply-connected, then the problem \eqref{1.dirichlet} has a unique solution for $0<\lambda<8\pi$ . The uniqueness result for $\lambda=8\pi$ is obtained by Chang-Chen-Lin \cite{ccl}. Later in \cite{bl1} Bartolucci and Lin extended the result to multiply-connected domains. Recently, based on the Bol's inequalities and equi-measurable symmetric rearrangement, Gui-Moradifam \cite{gm0} developed a new tool named ``Sphere Covering Inequality". This inequality and its generalizations are applied to establish the best constant in a Moser-Trudinger type inequalities, some symmetry and uniqueness results for the mean field equations and Onsager vortex (cf. \cite{gjm,gm0,gm1,gm,sstw} for details). Based on their results in \cite{gm0,gm}, we shall derive the uniqueness of \eqref{1.dirichlet} in the subcritical mass cases.

\begin{theorem}
\label{th.dirichlet}
Let $\Omega$ be an open bounded and simply-connected set in $\mathbb{R}^2$. If $0\leq\lambda<8\pi,$ then there exists a unique solution of the problem \eqref{1.dirichlet} with $\beta>0$. While if $\lambda=8\pi$, equation \eqref{1.dirichlet} has at most one solution.
\end{theorem}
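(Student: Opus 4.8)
The plan is to prove existence and uniqueness by two genuinely different arguments, matching the asymmetry in the statement: existence is asserted only on $0\le\lambda<8\pi$, while uniqueness (``at most one solution'') is claimed on the closed range $0\le\lambda\le8\pi$. This dichotomy mirrors the subcritical versus critical threshold in the Moser--Trudinger inequality, which strongly suggests that existence should come from a direct minimization that is coercive precisely below $8\pi$, whereas uniqueness should come from the Sphere Covering Inequality of \cite{gm0,gm}, whose sharp constant is exactly $8\pi$.

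For existence I would argue variationally. Problem \eqref{1.dirichlet} is the Euler--Lagrange equation of
\[
J_\lambda(u)=\frac12\int_\Omega\big(|\nabla u|^2+\beta u^2\big)\,dx-\lambda\log\Big(\int_\Omega e^{u}\,dx\Big),\qquad u\in H_0^1(\Omega).
\]
Using the Moser--Trudinger inequality in the form $\log\int_\Omega e^{u}\le \frac{1}{16\pi}\int_\Omega|\nabla u|^2\,dx+C_\Omega$ for $u\in H_0^1(\Omega)$, one obtains
\[
J_\lambda(u)\ge \frac12\Big(1-\frac{\lambda}{8\pi}\Big)\int_\Omega|\nabla u|^2\,dx+\frac{\beta}{2}\int_\Omega u^2\,dx-\lambda C_\Omega,
\]
so for $\lambda<8\pi$ the functional is coercive and weakly lower semicontinuous on $H_0^1(\Omega)$; a minimizer then exists and solves \eqref{1.dirichlet}. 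The term $\tfrac{\beta}{2}u^2\ge0$ only helps, and at $\lambda=8\pi$ coercivity degenerates, which is precisely why existence is not claimed there.

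For uniqueness I would pass to a Liouville normalization. If $u_1,u_2$ solve \eqref{1.dirichlet}, set $\rho_i=\lambda/\int_\Omega e^{u_i}\,dx$ and $w_i=u_i+\log\rho_i$. Then $\int_\Omega e^{w_i}\,dx=\lambda$, the trace $w_i=\log\rho_i$ is constant on $\partial\Omega$, and a direct computation gives $\Delta w_i+e^{w_i}=\beta u_i$ in $\Omega$. Since $u_i=0$ on $\partial\Omega$ and $\Delta u_i-\beta u_i=-\rho_i e^{u_i}<0$, the maximum principle for $\Delta-\beta$ forces $u_i>0$ in $\Omega$, so $\Delta w_i+e^{w_i}=\beta u_i>0$: each $w_i$ is a strict subsolution of the Liouville equation, i.e. the conformal metric $e^{w_i}|dx|^2$ has Gauss curvature strictly below $\tfrac12$. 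This is exactly the curvature hypothesis under which the Sphere Covering Inequality applies, and the \emph{new} linear term enters only through the nonnegative right-hand side $\beta u_i\ge0$, which preserves (indeed strengthens) the subsolution structure exploited in \cite{gm0,gm}. Comparing the two caps should force $w_1\equiv w_2$, hence $\rho_1=\rho_2$ and $u_1\equiv u_2$, for all $\lambda\le8\pi$, with the borderline $\lambda=8\pi$ corresponding to the rigidity (equality) case and still yielding ``at most one''.

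The hard part will be the comparison step. The two normalized caps carry \emph{different} constant boundary traces $\log\rho_1\ne\log\rho_2$ whenever the masses $\int_\Omega e^{u_i}$ differ, and they need not be globally ordered, whereas the Sphere Covering Inequality requires matched boundary data on a simply connected region together with an ordering of the two surfaces. I would assume without loss $\rho_1\le\rho_2$, shift $w_1$ up to $w_1+\log(\rho_2/\rho_1)$ so that it shares the boundary value of $w_2$ (the shift keeps $\Delta w_1+e^{w_1}+(\rho_2/\rho_1-1)e^{w_1}>0$, hence the subsolution property), and then run the comparison on the nodal components of $w_2-(w_1+\log(\rho_2/\rho_1))$, where simple connectivity and matched traces can be inherited. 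The genuinely delicate point — and the reason this problem needs the refined machinery of \cite{gm0,gm} rather than a naive one-shot application — is to organize this comparison so that it yields the \emph{sharp} threshold $8\pi$ instead of the weaker $4\pi$ that a direct summation of the two equal masses would give, and to verify the sub-/supersolution dichotomy on each nodal piece together with the equality analysis at $\lambda=8\pi$.
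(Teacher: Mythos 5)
Your existence argument is fine and is genuinely different from the paper's: you minimize $J_\lambda$ on $H_0^1(\Omega)$ using the Moser--Trudinger inequality (coercivity for $\lambda<8\pi$, weak lower semicontinuity, then elliptic regularity), whereas the paper obtains existence from a Leray--Schauder degree computation (Theorem \ref{th5.2}): a priori bounds via Brezis--Strauss and the Ma--Wei blow-up alternative, a homotopy in $\beta$ to the mean field equation, and the Chen--Lin degree formula giving $d_\lambda=1$ for $0\le\lambda<8\pi$. Your route is more elementary; the paper's buys extra information (the degree is $0$ on $(8\pi,16\pi)$, hence the non-uniqueness remark).

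The uniqueness half, however, has a genuine gap: the comparison step, which is the entire content of the theorem, is only a plan, and the specific plan you sketch does not close. Two concrete obstructions. First, after you shift $w_1$ to $\tilde w_1=w_1+\log(\rho_2/\rho_1)$ (with $\rho_2\ge\rho_1$, say) to match boundary traces, you get $\Delta\tilde w_1+e^{\tilde w_1}=\beta u_1+(\rho_2/\rho_1-1)e^{\tilde w_1}$; on a nodal component of $u_2-u_1=w_2-\tilde w_1$ where $u_2>u_1$, the Sphere Covering Inequality needs $f_2\ge f_1$, i.e. $\beta(u_2-u_1)\ge(\rho_2/\rho_1-1)e^{\tilde w_1}$, which is not implied by anything you have; shifting $w_2$ down instead destroys the subsolution property. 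Second, even granting an application of SCI on one nodal component $\omega$, the conclusion $\int_\omega(e^{\tilde w_1}+e^{w_2})\ge 8\pi$ only yields $\lambda(1+\rho_2/\rho_1)\ge 8\pi$, which is weaker than $\lambda\ge 4\pi$ --- you name this difficulty but do not resolve it. The paper's proof (Theorem \ref{th5.1}) avoids both problems by not working pointwise on nodal sets at all: the key new ingredient is the \emph{integral} inequality \eqref{5.11}, $\int_{\{v_2-v_1>t\}}(g(u_2)-g(u_1))\,dx\ge 0$ for every $t$ (proved under the normalization $\int_\Omega g(u_2)\ge\int_\Omega g(u_1)$ by splitting into the cases $t^*\ge 0$ and $t^*<0$), which lets one discard the $\beta$-term in the level-set identity for $\int_{\{v_2-v_1=t\}}|\nabla(v_2-v_1)|$. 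One then symmetrizes $v_2-v_1$ with respect to the measures $e^{v_1}dx$ and $e^{U_\theta}dx$ and compares with the explicit bubbles $U_\theta$ via Lemmas 5.C--5.E of Gui--Moradifam; the sharp $8\pi$ comes from the fact that \emph{both} normalized solutions carry total mass exactly $\lambda$, so Lemma 5.E forces $U_\theta(R)\le\psi(R)$ while $\psi(R)<U_\theta(R)$, a contradiction for $\lambda<8\pi$, with a separate rigidity argument (Lemmas 5.C and 5.D on $B_{r_0}$ and its complement, strictness from $\Delta v_i+e^{v_i}=\beta u_i>0$) at $\lambda=8\pi$. To repair your write-up you would essentially have to reproduce this machinery; the one-shot SCI on nodal components will not reach $8\pi$.
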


\noindent {\bf Remark 1.2:} \textcolor{black}{We remark that the the Dirichlet problem \eqref{1.dirichlet} no longer has a constant solution provided that $\lambda>0$. In other words,  the unique solution of the problem \eqref{1.dirichlet} for $0\leq \lambda <8\pi$ must be non-trivial, which differs from the Neumann problem \eqref{1.6}.
Furthermore, we can show that the degree of equation \eqref{1.dirichlet} is $0$ for $\lambda\in (8\pi,16\pi)$, see Theorem \ref{th5.2}. As a consequence, the Dirichlet problem \eqref{1.dirichlet}  has no solution or at least two solutions if $\lambda\in(8\pi,16\pi)$.
On the other hand, we note that the existence of solutions to \eqref{1.dirichlet} with $\lambda=8\pi$ is still unknown and it may depend on the topology of $\Omega$ (cf. \cite{bl1} for $\beta=0$). In this sense, $\lambda=8\pi$ is a threshold for the uniqueness of solutions for the Dirichlet problem \eqref{1.dirichlet}.}
\medskip


The paper is organized as follows. In Section 2, we derive a differential inequality which involves the distribution of $u$, the function $\int_{\{u>t\}}f(u)dx$, the average of function $g(u)$ and the isoperimetric profile of the domain. Based on this result, in Section 3 we derive an integral inequality which is the key to the proof of Theorems \ref{th1.1}-\ref{th.rotation}. The Section 4 is devoted to the proof of our results on the Neumann problem \eqref{1.6}. While in Section 5 we study the uniqueness of solutions to the Dirichlet problem. In the appendix, we give a proof for the positivity of solutions to \eqref{1.6} in $\Omega.$

\medskip
\section{A differential inequality}
In the present section, we shall establish a differential inequality, which plays an important role in our discussion. Given two functions $f,g$ which satisfy \eqref{1.fga}. We set
\begin{equation}
\label{2.2}
\mathcal{C}(f,g)=\{t\mid f(t)=g(t),~t\geq0\},
\end{equation}
and consider the following nonlinear problem:
\begin{equation}
\label{2.3}
\begin{cases}
\Delta u-g(u)+f(u)=0~&\mathrm{in}~\Omega,\\
u>0~&\mathrm{in}~\Omega,\\
\partial_{\nu}u=0~&\mathrm{on}~\partial\Omega.
\end{cases}
\end{equation}

To proceed with our argument, we make the following preparation. Denote by $\mathcal{H}^s$ the $s$-dimensional Hausdorff measure in $\Omega$. Given $\omega\subset\Omega$, its perimeter relative to $\Omega$ is defined as
\begin{equation*}
\mathcal{P}(\omega,\Omega):=\mathcal{H}^1(\partial\omega\cap \Omega),
\end{equation*}
and its area $\mathcal{H}^2(\omega)$ will be denoted by $|\omega|.$ (See Figure 1 for an illustration of $\partial\omega\cap\Omega$.)
\medskip

\begin{figure}[h]
\includegraphics[width=12cm]{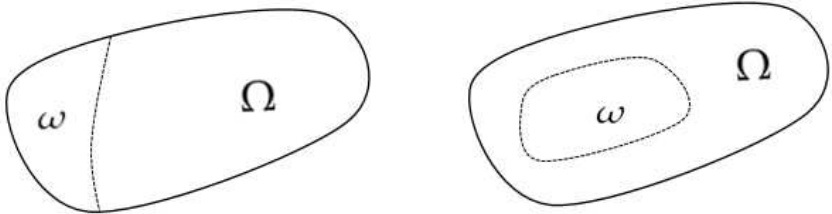}
\caption{For the figure on the
left, the part on the left of the dashed curve is $\omega$ and the
dashed curve represents $\partial\omega\cap\Omega$, for the figure
on the right, the part enclosed by the dashed closed curve is
$\omega$ and the dashed closed curve represents
$\partial\omega\cap\Omega.$}
\end{figure}

\noindent{\bf Definition 2.1.} Let $\mathcal{O}_{\Omega}$ be the class of open subsets $\omega\subset\Omega$ satisifying
\begin{equation*}
\omega=\omega'\cap\Omega,~\omega'\Subset\mathbb{R}^2~\mathrm{of~class}~C^1.
\end{equation*}
The ``isoperimetric profile" of $\Omega$ is the function $I_{\Omega}:[0,|\Omega|]\to(0,\infty)$ defined as
\begin{equation*}
I_{\Omega}(s):=\inf\{\mathcal{H}^1(\partial\omega\cap\Omega):\omega\in\mathcal{O}_{\Omega},\mathcal{H}^2(\omega)=s\},~\forall s\in(0,|\Omega|],
\end{equation*}
and we set $I_\Omega(0)=0.$
\medskip

\noindent{\bf Remark 2.1:} For each $\omega\in\mathcal{O}_\Omega$ the boundary of $\omega$ in $\Omega$ is a $1$-submanifold of class $C^1$. We remark that only $\partial\omega\cap\Omega$ is taken into consideration in the definition of the isoperimetric profile. We mention two properties of the isoperimetric profile that will be used in the following:
\begin{equation}
\label{2.4}
I_{\Omega}(s)=I_{\Omega}(|\Omega|-s),~\forall s\in[0,|\Omega|];
\end{equation}
\begin{equation}
\label{2.5}
I_{\Omega}(s)=0\Leftrightarrow s=0~\mathrm{or}~s=|\Omega|.
\end{equation}
The symmetric property \eqref{2.4} readily follows from the definition of isoperimetric profile, while for \eqref{2.5} we refer to \cite{g}.

We will need the following lemma.

\begin{lemma}
\label{le2.1}
Assume that \eqref{1.fga} holds. Then any non-constant solution $u$ of problem \eqref{2.2} satisfies
\begin{equation*}
\label{2.6}
\mathcal{H}^2(\{u\mid f(u)\neq g(u)\})=0.
\end{equation*}
\end{lemma}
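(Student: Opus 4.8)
The plan is to establish the assertion value-by-value. Writing $E_t:=\{x\in\Omega:u(x)=t\}$ for the fibre of $u$ over a level $t$, I would prove that $\mathcal H^2(E_t)=0$ for every $t\ge 0$ with $f(t)\neq g(t)$, that is for every $t\notin\mathcal C(f,g)$. This is the content of the lemma: it says that no level set of positive area can sit off the coincidence set $\mathcal C(f,g)$, and it is exactly what the subsequent distribution-function analysis needs, since it confines the discontinuities of $\mu(t)=|\{x\in\Omega:u(x)>t\}|$ to $\mathcal C(f,g)$. Because $u$ is a classical solution of \eqref{2.3}, elliptic regularity gives $u\in C^2(\Omega)$, so $\Delta u=g(u)-f(u)$ holds pointwise; in particular $\Delta u\equiv g(t)-f(t)\neq 0$ throughout $E_t$.

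Fixing such a $t$, I would split $E_t=\bigl(E_t\cap\{\nabla u\neq 0\}\bigr)\cup\bigl(E_t\cap\{\nabla u=0\}\bigr)$ and treat the regular and the critical parts separately; the regular part is precisely the piece that a bare critical-set argument leaves untouched. On the regular part, at each $x_0$ with $\nabla u(x_0)\neq 0$ the implicit function theorem realises $\{u=t\}$ near $x_0$ as a $C^1$ curve, which is $\mathcal H^2$-null. Since $\{\nabla u\neq 0\}$ is open, the set $E_t\cap\{\nabla u\neq 0\}$ is relatively open in the fibre, and as $\Omega$ is second countable it is covered by countably many such charts; hence $\mathcal H^2\bigl(E_t\cap\{\nabla u\neq 0\}\bigr)=0$.

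For the critical part I would invoke the measure-theoretic fact that the weak gradient of a $W^{1,1}_{loc}$ function vanishes $\mathcal H^2$-almost everywhere on each of its level sets. Applied to the first derivatives $\partial_1u,\partial_2u\in W^{1,1}_{loc}(\Omega)$ on their zero sets, this yields $\nabla(\partial_i u)=0$ a.e.\ on $\{\partial_i u=0\}$; intersecting over $i=1,2$ gives $D^2u=0$, and therefore $\Delta u=0$, $\mathcal H^2$-a.e.\ on $\{\nabla u=0\}$. Were $\mathcal H^2\bigl(E_t\cap\{\nabla u=0\}\bigr)$ positive we would then have $\Delta u=0$ a.e.\ on a set of positive area contained in $E_t$, contradicting $\Delta u\equiv g(t)-f(t)\neq 0$ on $E_t$. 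Thus this part is null as well and $\mathcal H^2(E_t)=0$, as claimed.

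I expect the critical part to be the genuine obstacle, since it rests on the almost-everywhere vanishing of $D^2u$ on $\{\nabla u=0\}$ rather than on any pointwise reasoning; the regular part, though it is exactly what must be added to a pure critical-set treatment, becomes routine once the implicit function theorem is applied fibrewise. The one structural point to bear in mind is that the conclusion is genuinely fibrewise: for each fixed non-coincidence value $t$ the fibre $E_t$ has zero area, equivalently the jump set of $\mu$ is contained in $\mathcal C(f,g)$, and it is in this form that the lemma enters the later analysis of the discontinuities of $\mu$.
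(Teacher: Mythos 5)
Your proof is correct, and it uses the same decomposition as the paper: the fibre $E_t=\{u=t\}$, $t\notin\mathcal{C}(f,g)$, is split into the regular part $E_t\cap\{\nabla u\neq0\}$ and the critical part $E_t\cap\{\nabla u=0\}$, and your treatment of the regular part (implicit function theorem, countably many $C^1$ charts, hence $\mathcal{H}^2$-null) is identical to the paper's. Where you genuinely diverge is the critical part. The paper argues pointwise: at each critical point $x$ of the fibre, $\Delta u(x)=g(t)-f(t)\neq0$ forces $\nabla(\partial_i u)(x)\neq0$ for at least one $i\in\{1,2\}$, so $x$ lies in $\{\partial_i u=0,\ \nabla(\partial_i u)\neq0\}$, which is locally a $C^1$ curve by a second application of the implicit function theorem, this time to $\partial_i u$ (legitimate since $u\in C^2$, indeed better by elliptic bootstrapping from $f,g\in C^2$); hence $E_2(t)$ is contained in a union of $1$-submanifolds and is null. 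You instead invoke the measure-theoretic fact that the weak gradient of a $W^{1,1}_{\mathrm{loc}}$ function vanishes a.e.\ on each of its level sets, applied to $\partial_1u$ and $\partial_2u$, to conclude $D^2u=0$, hence $\Delta u=0$, a.e.\ on $\{\nabla u=0\}$, which contradicts $\Delta u\equiv g(t)-f(t)\neq0$ on a hypothetical critical subset of $E_t$ of positive area. Both routes are sound and yield the same fibrewise conclusion; the paper's stays within classical calculus and is self-contained, while yours replaces the second IFT application by a standard but deeper Sobolev lemma, and in exchange requires only $u\in W^{2,1}_{\mathrm{loc}}$ with the equation holding a.e., so it is more robust under weaker regularity and avoids the pointwise case analysis. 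You also read the statement correctly in its fibrewise form, $\mathcal{H}^2(\{u=t\})=0$ for each $t\notin\mathcal{C}(f,g)$, which is exactly what the paper proves and what is needed later to confine the discontinuities of $\mu$, $F$, $G$ to $\mathcal{C}(f,g)$.
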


\begin{proof}
Given a fixed $t\in\mathbb{R}\setminus \mathcal{C}(f,g)$, we divide $\{u=t\}:=E_1(t)\cup E_2(t)$ with
\begin{equation*}
E_1(t):=\{x\in\Omega:u(x)=t,\nabla u\neq0\},
\end{equation*}
and
\begin{equation*}
E_2(t):=\{x\in\Omega,u(x)=t,\nabla u(x)=0\}.
\end{equation*}
Using the implicit function theorem, we get the set $E_1(t)$ is locally a one-dimensional manifold. Then we can deduce that $E_1(t)$ is at most countable union of sets of measure zero. Hence $\mathcal{H}^2(E_1(t))=0$ for all $t\in\mathbb{R}$.

For the set $E_2(t)$, using the equation \eqref{2.3} we have
\begin{equation*}
\Delta u\neq0,~\mbox{whenever}~u(x)\notin \mathcal{C}(f,g).
\end{equation*}
Therefore if $t\notin \mathcal{C}(f,g)$, then we have
\begin{equation*}
\begin{aligned}
E_2(t)\subseteq&\{x\in\Omega:\nabla u(x)=0,u(x)\notin \mathcal{C}(f,g)\}\\
\subseteq&\bigcup_{i=1}^2\{x\in\Omega:\partial_iu(x)=0,\nabla(\partial_iu)(x)\neq0\}.
\end{aligned}
\end{equation*}
Thus, for $t\notin \mathcal{C}(f,g)$, the set $E_2(t)$ is contained in a finite union of $1$-submanifolds. So we conclude that $\mathcal{H}^2(E_2(t))=0$ for all $t\in\mathbb{R}\setminus \mathcal{C}(f,g).$

From the above discussion we get $\mathcal{H}^2(E(t))=0$ for all $t\in\mathbb{R}^2\setminus \mathcal{C}(f,g)$, which finishes the proof.
\end{proof}

For any solution $u$ of \eqref{2.2}, we introduce the following notations:
\begin{equation*}
\label{2.7}
F(t):=\int_{\{u>t\}}f(u)dx,~ G(t):=\int_{\{u>t\}}g(u)dx,~ \mu(t):=|\{u>t\}|=\mathcal{H}^2(\{u>t\}),
\end{equation*}
and
\begin{equation*}
\label{2.8}
\tilde F(t):=\int_{\{u<t\}}f(u)dx,~ \tilde G(t):=\int_{\{u<t\}}g(u)dx,~ \tilde \mu(t):=|\{u<t\}|=\mathcal{H}^2(\{u<t\}).
\end{equation*}

By Lemma \ref{le2.1}, the functions $F,\tilde F,G,\tilde{G},\mu,$ and $\tilde{\mu}$ are continuous on $\mathbb{R}\setminus\mathcal{C}(f,g)$. Furthermore, if the set $\mathcal{C}(f,g)$ is finite, then the above functions are monotone and therefore differentiable a.e. $t\in\mathbb{R}.$

We set
\begin{equation*}
\label{2.9}
A_g=\frac{1}{|\Omega|}\int_{\Omega}g(u)dx.
\end{equation*}
Before stating the main result of this section, we give the following lemma.

\begin{lemma}
\label{le2.2}
Suppose that $h$ is a non-decreasing function. For any solution $u$ of \eqref{2.3}, it holds that
\begin{equation*}
A_h|\{u>t\}|\leq\int_{\{u>t\}}h(u)dx~\mathrm{and}~A_h|\{u<t\}|\geq\int_{\{u<t\}}h(u)dx,~\forall t\in[\min_{\Omega}u,\max_{\Omega}u].
\end{equation*}
\end{lemma}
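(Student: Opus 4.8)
The plan is to prove this monotone-rearrangement-type inequality by exploiting the defining property of the mean value $A_h$ together with the non-decreasing nature of $h$. The essential observation is that $A_h$ is a weighted average of the values $h(u)$ over $\Omega$, and since $h(u)$ is large where $u$ is large and small where $u$ is small, the restriction of $h(u)$ to a superlevel set $\{u>t\}$ should have an average that is at least $A_h$, while its restriction to a sublevel set $\{u<t\}$ should have an average at most $A_h$. I will prove only the first inequality in detail, since the second follows from a symmetric argument (or by applying the first to $-u$ and $-h(-\cdot)$, after noting the appropriate sign conventions).

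First I would set $a=A_h$ and fix $t\in[\min_\Omega u,\max_\Omega u]$. The goal is to show
\begin{equation*}
\int_{\{u>t\}}\bigl(h(u)-a\bigr)\,dx\geq 0.
\end{equation*}
The idea is to compare the integrand's sign with the geometry of the level sets. I would split $\Omega=\{u>t\}\cup\{u\leq t\}$ and use the defining identity $\int_\Omega\bigl(h(u)-a\bigr)\,dx=0$, which is just $\int_\Omega h(u)\,dx=a|\Omega|$ rewritten. This gives the equivalent formulation
\begin{equation*}
\int_{\{u>t\}}\bigl(h(u)-a\bigr)\,dx=-\int_{\{u\leq t\}}\bigl(h(u)-a\bigr)\,dx,
\end{equation*}
so it suffices to show the right-hand side is nonnegative, i.e.\ $\int_{\{u\leq t\}}\bigl(a-h(u)\bigr)\,dx\geq 0$.

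The key step is a pointwise comparison argument using monotonicity. Because $h$ is non-decreasing, on the set $\{u>t\}$ we have $h(u)\geq h(t)$, and on $\{u\leq t\}$ we have $h(u)\leq h(t)$. I would introduce the threshold value $h(t)$ as an intermediary and write
\begin{equation*}
\int_{\{u>t\}}\bigl(h(u)-a\bigr)\,dx=\int_{\{u>t\}}\bigl(h(u)-h(t)\bigr)\,dx+\bigl(h(t)-a\bigr)\,\mu(t).
\end{equation*}
The first term is nonnegative by monotonicity. The remaining task is to control the sign of $\bigl(h(t)-a\bigr)\mu(t)$; when $h(t)\geq a$ the conclusion is immediate, but when $h(t)<a$ one must absorb the deficit into the first term or use the sublevel estimate. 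I therefore expect the cleanest route is to prove the two inequalities simultaneously: the identity $\int_\Omega(h(u)-a)dx=0$ couples them, and a single comparison against the threshold $h(t)$ handles both cases at once. Concretely, from the decomposition $\int_\Omega(h(u)-h(t))dx=\int_{\{u>t\}}(h(u)-h(t))dx+\int_{\{u\leq t\}}(h(u)-h(t))dx$, where the first integrand is $\geq 0$ and the second is $\leq 0$, combined with $\int_\Omega h(u)\,dx=a|\Omega|$, one recovers both bounds after rearranging.

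The main obstacle, and the only delicate point, is the bookkeeping around the threshold value $h(t)$ and the sign of $h(t)-a$; the argument is otherwise elementary. One must be careful that the two stated inequalities are genuinely equivalent under the coupling identity, so that establishing the monotone comparison against $h(t)$ yields both at once rather than only one. The regularity needed to make sense of the level-set integrals is already guaranteed by Lemma~\ref{le2.1}, which ensures the relevant sets have the correct measure-theoretic behavior, so no additional hypotheses on $u$ beyond those in \eqref{1.fga} are required.
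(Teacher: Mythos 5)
Your argument is correct, and it shares the paper's key ingredient --- the monotone comparison $h(u)\geq h(t)$ on $\{u>t\}$ and $h(u)\leq h(t)$ on $\{u\leq t\}$ --- but it concludes differently. The paper turns that comparison into the statement that the average of $h(u)$ over $\{u>t\}$ dominates the average over $\{u\leq t\}$, and then applies the mediant inequality
\begin{equation*}
\frac{P}{p}\geq\frac{Q}{q}\ \Longrightarrow\ \frac{P}{p}\geq\frac{P+Q}{p+q}=A_h,
\end{equation*}
which yields both bounds in two lines with no case analysis. You instead keep the threshold $h(t)$ explicit and close the argument with the identity $\int_\Omega h(u)\,dx=A_h|\Omega|$; this works, but the final ``one recovers both bounds after rearranging'' does genuinely require the dichotomy you flagged: when $h(t)\geq A_h$ you use $\int_{\{u>t\}}h(u)\,dx\geq h(t)\mu(t)\geq A_h\mu(t)$ directly, and when $h(t)<A_h$ you must pass through the complement via $\int_{\{u>t\}}h(u)\,dx\geq A_h|\Omega|-h(t)\bigl(|\Omega|-\mu(t)\bigr)\geq A_h\mu(t)$. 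Both branches close, so there is no gap, only an under-justified last step; the trade-off is that your route avoids the divisions by $|\{u>t\}|$ and $|\{u\leq t\}|$ in the paper's averages (which degenerate harmlessly at $t=\min_\Omega u$ and $t=\max_\Omega u$), while the paper's mediant trick avoids your case split. One small point worth noting either way: the second inequality is stated for $\{u<t\}$ rather than $\{u\leq t\}$, but since $h(u)=h(t)$ on $\{u=t\}$ the same comparison applies verbatim, so neither proof is affected.
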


\begin{proof}
We only give the proof of the first inequality, the other one can be proved similarly. Using $h$ is non-decreasing, we immediately get
\begin{equation*}
\frac{\int_{\{u>t\}}h(u)dx}{|\{u>t\}|}\geq\frac{\int_{\{u\leq t\}}h(u)dx}{|\{u\leq t\}|}.
\end{equation*}
As a consequence
\begin{equation*}
\frac{\int_{\{u>t\}}h(u)dx}{|\{u>t\}|}\geq\frac{\int_{\{u>t\}}h(u)dx+\int_{\{u\leq t\}}h(u)dx}{|\{u>t\}|+|\{u\leq t\}|}=A_h,
\end{equation*}
which implies
\begin{equation*}
A_h|\{u>t\}|\leq\int_{\{u>t\}}h(u)dx.
\end{equation*}
It proves the result.
\end{proof}

Now we establish the main result in this section.

\begin{proposition}
\label{pr2.1}
Let $f,g$ be a function satisfying \eqref{1.fga}. Assume $\mathcal{C}(f,g)$ is finite. Then any non-constant solution $u$ of \eqref{2.3} satisfies the following inequalities:
\begin{align}
\label{2.10}
&\frac{d}{dt}\left(A_gf\mu^2-F^2\right)\geq A_gf'(t)\mu^2(t)+2f(t)I_{\Omega}^2(\mu(t)),~\forall t\in\mathbb{R}\setminus\mathcal{D},\\
\label{2.11}
&\frac{d}{dt}\left(A_gf\tilde \mu^2-\tilde F^2\right)\geq A_gf'(t)\tilde \mu^2(t)+2f(t)I_{\Omega}^2(\tilde \mu(t)),~\forall t\in\mathbb{R}\setminus\mathcal{D},
\end{align}
where $\mathcal{D}:=\{u(x):x\in\Omega,\nabla u(x)=0\}$ and $I_{\Omega}$ stands for the isoperimetric profile of $\Omega.$
\end{proposition}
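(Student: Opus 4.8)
The plan is to differentiate the auxiliary quantity $A_g f(t)\mu^2(t)-F^2(t)$ directly, thereby reducing the proposition to a single pointwise inequality, and then to close that inequality by combining three standard facts with the one-sided comparison furnished by Lemma \ref{le2.2}.

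First I would record the differentiation formulas valid at every regular value $t\in\mathbb{R}\setminus\mathcal{D}$. Since $u\in C^2(\Omega)$ and $t$ is a regular value, the level set $\gamma_t:=\{u=t\}\cap\Omega$ is a $C^1$ one-dimensional manifold on which $\nabla u\neq0$, so the coarea formula gives
\begin{equation*}
-\mu'(t)=\int_{\gamma_t}\frac{d\mathcal{H}^1}{|\nabla u|},\qquad F'(t)=f(t)\mu'(t),
\end{equation*}
the latter because $f(u)\equiv f(t)$ on $\gamma_t$. Next I would extract the geometric information carried by the equation: integrating $\Delta u=g(u)-f(u)$ over $\{u>t\}$, applying the divergence theorem, and using $\partial_\nu u=0$ on $\partial\Omega$ together with the fact that the outer normal of $\{u>t\}$ along $\gamma_t$ is $-\nabla u/|\nabla u|$, yields the level-set identity
\begin{equation*}
\int_{\gamma_t}|\nabla u|\,d\mathcal{H}^1=F(t)-G(t)\geq0.
\end{equation*}
Finally, writing $L(t):=\mathcal{H}^1(\gamma_t)$ and applying the Cauchy--Schwarz inequality to $1=|\nabla u|^{1/2}\cdot|\nabla u|^{-1/2}$ gives $L^2(t)\leq(F(t)-G(t))(-\mu'(t))$, while the definition of the isoperimetric profile gives $L(t)\geq I_\Omega(\mu(t))$; combining the two produces the key estimate
\begin{equation*}
I_\Omega^2(\mu(t))\leq(F(t)-G(t))\,(-\mu'(t)).
\end{equation*}

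With these in hand I would differentiate. A direct computation using $F'=f\mu'$ gives
\begin{equation*}
\frac{d}{dt}\big(A_g f\mu^2-F^2\big)=A_g f'(t)\mu^2(t)+2f(t)\mu'(t)\big(A_g\mu(t)-F(t)\big),
\end{equation*}
so that \eqref{2.10} is equivalent, after cancelling the common term and dividing by $2f(t)>0$, to the pointwise inequality
\begin{equation*}
\mu'(t)\big(A_g\mu(t)-F(t)\big)\geq I_\Omega^2(\mu(t)).
\end{equation*}

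To close it I would invoke Lemma \ref{le2.2} with $h=g$ (note $g$ is non-decreasing on the range of $u$ since $u>0$), which gives $A_g\mu(t)\leq G(t)$, hence $A_g\mu(t)-F(t)\leq G(t)-F(t)\leq0$. Since $\mu'(t)\leq0$, multiplying this inequality by the non-positive factor $\mu'(t)$ reverses it, and then the key estimate finishes the argument:
\begin{equation*}
\mu'(t)\big(A_g\mu(t)-F(t)\big)\geq\mu'(t)\big(G(t)-F(t)\big)=(-\mu'(t))(F(t)-G(t))\geq I_\Omega^2(\mu(t)).
\end{equation*}
The inequality \eqref{2.11} follows by the mirror-image argument on the sublevel sets $\{u<t\}$: one has $\tilde\mu=|\Omega|-\mu$, $\tilde\mu'=-\mu'\geq0$, the level-set identity becomes $\int_{\gamma_t}|\nabla u|=\tilde G(t)-\tilde F(t)$, and Lemma \ref{le2.2} now supplies $A_g\tilde\mu(t)\geq\tilde G(t)$, which is exactly the sign needed so that the product with $\tilde\mu'\geq0$ points the right way, while the symmetry $I_\Omega(\tilde\mu)=I_\Omega(\mu)$ from \eqref{2.4} matches the profiles. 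The main obstacle, and the only place where the non-constancy of $g$ enters, is precisely this last step: unlike the classical case where $g\equiv A$ forces $G(t)=A\mu(t)$ exactly, here we have only the one-sided bound from Lemma \ref{le2.2}, and the whole proof hinges on the fortunate fact that this inequality points in the direction compatible with the sign of $\mu'$ (and, symmetrically, of $\tilde\mu'$).
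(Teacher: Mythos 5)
Your proof is correct and follows essentially the same route as the paper's: coarea formula for $\mu'$ and $F'$, the divergence theorem with the Neumann condition to get $\int_{\{u=t\}\cap\Omega}|\nabla u|\,d\mathcal{H}^1=F(t)-G(t)$, Cauchy--Schwarz on the level set, the definition of $I_\Omega$, and Lemma \ref{le2.2} to replace $G(t)$ (resp.\ $\tilde G(t)$) by $A_g\mu(t)$ (resp.\ $A_g\tilde\mu(t)$) with the sign compatible with $\mu'\leq0$ (resp.\ $\tilde\mu'\geq0$). The only cosmetic difference is that you differentiate $A_gf\mu^2-F^2$ first and reduce to a pointwise inequality, whereas the paper estimates $-F'F$ directly and rearranges at the end.
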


\begin{proof}
At first, we notice that Sard's Theorem ensures that the set of critical value $\mathcal{D}$ associate to $u$ has Lebesgue measure zero in $\mathbb{R}.$

Let us prove \eqref{2.10} first. By Lemma \ref{le2.1}, the functions
$F,$ $G$ and $\mu$ are continuous on $\mathbb{R}\setminus
\mathcal{C}(f,g)$. Therefore, by using the co-area formula, we obtain
\begin{equation}
\label{2.12}
\mu'(t)=-\int_{\{u=t\}}\frac{1}{|\nabla u|}d\mathcal{H}^1,~\forall t\in \mathbb{R}\setminus\mathcal{D},
\end{equation}
\begin{equation}
\label{2.13}
F'(t)=-\int_{\{u=t\}}\frac{f(u)}{|\nabla u|}d\mathcal{H}^1=f(t)\mu'(t),~\forall t\in \mathbb{R}\setminus\mathcal{D},
\end{equation}
\begin{equation}
\label{2.14}
G'(t)=-\int_{\{u=t\}}\frac{g(u)}{|\nabla u|}d\mathcal{H}^1=g(t)\mu'(t),~\forall t\in \mathbb{R}\setminus\mathcal{D}.
\end{equation}
Secondly, by integrating equation \eqref{2.3} on the set $\{u>t\}$ and using the Stoke's Theorem, we obtain
\begin{equation}
\label{2.15}
\int_{\partial \{u>t\}}|\nabla u|d\mathcal{H}^1=F(t)-G(t),~\forall t\in\mathbb{R}\setminus\mathcal{D}.
\end{equation}
Since
\begin{equation*}
\begin{aligned}
\partial\{ u>t\}=~&(\partial\{u>t\}\cap \Omega)\cup (\partial\{u>t\}\cap\partial\Omega)\\
=~&(\{u=t\}\cap\Omega)\cup(\partial\{u>t\}\cap\partial \Omega),
\end{aligned}
\end{equation*}
and furthermore $u$ satisfies the Neumann boundary condition on $\partial\{u>t\}\cap\partial\Omega$, the left hand side of \eqref{2.15} equals to $\int_{\{u=t\}\cap\Omega}|\nabla u|d\mathcal{H}^1.$ Based on this observation, we have
\begin{equation}
\label{2.16}
F(t)=G(t)+\int_{\{u=t\}\cap\Omega}|\nabla u|d\mathcal{H}^1,~\forall t\in\mathbb{R}\setminus\mathcal{D}.
\end{equation}
Using \eqref{2.16} and the assumption $f\geq0$ we derive:
\begin{equation}
\label{2.17}
\begin{aligned}
-F'(t)F(t)=~&\left(G(t)+\int_{\{u=t\}\cap\Omega}|\nabla u|d\mathcal{H}^1\right)\int_{\{u=t\}}\frac{f(t)}{|\nabla u|}d\mathcal{H}^1\\
=~&f(t)\int_{\{u=t\}\cap\Omega}|\nabla u|d\mathcal{H}^1\int_{\{u=t\}}\frac{1}{|\nabla u|}d\mathcal{H}^1+\left(\int_{\{u=t\}}\frac{f(t)}{|\nabla u|}d\mathcal{H}^1\right)\int_{\{u>t\}}g(u)dx\\
\geq~&f(t)\left(\int_{\{u=t\}\cap\Omega}d\mathcal{H}^1\right)^2-f(t)A_g\mu(t)\mu'(t)\\
=~&f(t)[\mathcal{H}^1(\{u=t\}\cap\Omega)]^2-f(t)A_g\mu(t)\mu'(t),
\end{aligned}
\end{equation}
where we have used the Schwarz inequality and Lemma \ref{le2.2}. Recall the definition of isoperimetric profile of $\Omega$, we have
\begin{equation}
\label{2.18}
\int_{\{u=t\}\cap\Omega}d\mathcal{H}^1=\mathcal{P}(\{u>t\},\Omega)\geq I_{\Omega}(\mu(t)).
\end{equation}
Hence, \eqref{2.17} and \eqref{2.18} yield
\begin{equation*}
\label{2.19}
\frac{1}{2}\left(A_gf\mu^2-F^2\right)'(t)\geq f(t)I_{\Omega}^2(\mu(t))+\frac{A_g}{2}f'(t)\mu^2(t),
\end{equation*}
where we have used
\begin{equation*}
f\mu\mu'=f\frac{(\mu^2)'}{2}=\frac12\left((f\mu^2)'-f'\mu^2\right).
\end{equation*}
Following almost the same argument, we can derive \eqref{2.11} by replacing \eqref{2.12}-\eqref{2.14} and \eqref{2.16} with
\begin{equation*}
\tilde \mu'(t)=\int_{\{u=t\}}\frac{1}{|\nabla u|}d\mathcal{H}^1,~\tilde F'(t)=f(t)\tilde \mu'(t),~\tilde G'(t)=g(t)\tilde \mu'(t),
\end{equation*}
and
\begin{equation*}
\tilde F(t)=\tilde G(t)-\int_{\{u=t\}}|\nabla u|d\mathcal{H}^1.
\end{equation*}
\end{proof}

\section{An integral inequality}
In this section, we shall derive an important integral inequality, which plays a key role in the proof of our uniqueness result.
\begin{proposition}
\label{pr3.1}
Assume \eqref{1.fga} holds and the set $\mathcal{C}(f,g)$ is finite. The any solution $u$ of \eqref{2.3} satisfies
\begin{equation}
\label{3.1}
\frac{A_g}{2}\int_{-\infty}^\infty f'(t)\mu(t)(|\Omega|-\mu(t))dt\geq\int_{-\infty}^\infty f(t)I_{\Omega}^2(\mu(t))dt.
\end{equation}
\end{proposition}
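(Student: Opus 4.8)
The plan is to integrate the two differential inequalities \eqref{2.10} and \eqref{2.11} across the whole range of $u$ and add them, the only delicate issue being the jump discontinuities of the integrands. Write $m=\min_\Omega u$, $M=\max_\Omega u$, and introduce the augmented functions
\[
\Psi(t):=A_gf(t)\mu^2(t)-F^2(t),\qquad \tilde\Psi(t):=A_gf(t)\tilde\mu^2(t)-\tilde F^2(t),
\]
which are precisely the quantities differentiated in \eqref{2.10} and \eqref{2.11}. Since $\mathcal{C}(f,g)$ is finite, Lemma \ref{le2.1} shows that $\mu,\tilde\mu,F,\tilde F$ are monotone and continuous off the finite set $\mathcal{C}(f,g)$; hence $\Psi,\tilde\Psi$ are of bounded variation, their only jumps occur at the finitely many points of $\mathcal{C}(f,g)$, and their a.e.\ derivatives obey the pointwise bounds \eqref{2.10}, \eqref{2.11}.

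The decisive step is to control the sign of these jumps. At a jump value $t_j$ the flat set $\{u=t_j\}$ has positive measure $\delta_j:=a_j-b_j$, with $a_j=\mu(t_j^-)$, $b_j=\mu(t_j^+)$; since $f(u)\equiv f(t_j)$ there, $F(t_j^-)-F(t_j^+)=f(t_j)\delta_j$. A direct computation then gives
\[
\Psi(t_j^+)-\Psi(t_j^-)=f(t_j)\,\delta_j\,\big[(F(t_j^-)+F(t_j^+))-A_g(a_j+b_j)\big].
\]
The bracket is non-negative because $F(t)\geq G(t)\geq A_g\mu(t)$ for every $t$, the first inequality being \eqref{2.16} (the integrand $|\nabla u|$ is non-negative) and the second being Lemma \ref{le2.2}; thus each jump of $\Psi$ is $\ge0$. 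The analogous manipulation for $\tilde\Psi$, now using the companion bound $\tilde F(t)\leq\tilde G(t)\leq A_g\tilde\mu(t)$ from the second inequality of Lemma \ref{le2.2}, shows that the jumps of $\tilde\Psi$ are also $\ge0$. This positivity of the jumps is the ingredient that lets us discard them when integrating, and it is where $\Psi$ and $\tilde\Psi$ must be treated together: the two inequalities of Lemma \ref{le2.2} replace the single identity $G=A\mu$ available in the constant-$g$ case.

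With this in hand I would integrate \eqref{2.10} and \eqref{2.11} over an interval $(t_0,t_1)\supset[m,M]$ via the BV identity $\int\Psi'_{ac}\,dt=\Psi(t_1)-\Psi(t_0)-\sum_j[\Psi(t_j^+)-\Psi(t_j^-)]$, and similarly for $\tilde\Psi$. For $t<m$ one has $\mu\equiv|\Omega|$, $F\equiv F_0:=\int_\Omega f(u)\,dx$ and $\tilde\mu,\tilde F\equiv0$, while for $t>M$ the roles are reversed. Evaluating the boundary terms this way and adding the two integrated inequalities, the $F_0^2$ contributions cancel, and the $f(t_0),f(t_1)$ terms cancel against the pieces of $A_g\int f'\mu^2$ and $A_g\int f'\tilde\mu^2$ coming from $(t_0,m)$ and $(M,t_1)$, leaving $A_g|\Omega|^2(f(m)-f(M))=-A_g|\Omega|^2\int_m^M f'\,dt$. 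Because all jumps are $\ge0$, the summed left-hand side is $\le0$, which yields
\[
A_g\int_m^M f'(t)\big[\,|\Omega|^2-\mu^2(t)-\tilde\mu^2(t)\,\big]\,dt\;\geq\;4\int_m^M f(t)\,I_\Omega^2(\mu(t))\,dt.
\]
Finally, since $\mu(t)+\tilde\mu(t)=|\Omega|$ for a.e.\ $t$ (the level sets being null off $\mathcal{C}(f,g)$) and $I_\Omega(\tilde\mu)=I_\Omega(|\Omega|-\mu)=I_\Omega(\mu)$ by \eqref{2.4}, one has $|\Omega|^2-\mu^2-\tilde\mu^2=2\mu\tilde\mu=2\mu(|\Omega|-\mu)$; both integrands vanish outside $[m,M]$, so dividing by $4$ gives exactly \eqref{3.1}.

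I expect the main obstacle to be the jump analysis: one must verify that the finitely many discontinuities of $\Psi$ and $\tilde\Psi$ contribute with the favorable sign, which is exactly the point where the two separate inequalities of Lemma \ref{le2.2} are indispensable. The accompanying bookkeeping of the boundary terms at $t_0,t_1$ is routine but must be carried out carefully so that the extraneous $f(t_0),f(t_1),F_0$ contributions cancel and only the clean $\mu\tilde\mu$ expression survives.
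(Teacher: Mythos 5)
Your proposal is correct and follows essentially the same route as the paper: the same augmented functions $\Psi=A_gf\mu^2-F^2$ and $\tilde\Psi=A_gf\tilde\mu^2-\tilde F^2$, the same jump analysis at the finitely many points of $\mathcal{C}(f,g)$, and the same integration of \eqref{2.10}--\eqref{2.11} combined with $\tilde\mu=|\Omega|-\mu$ and $I_\Omega(\mu)=I_\Omega(|\Omega|-\mu)$. The only (harmless) variation is that you control the jump signs via $F\geq G\geq A_g\mu$ and $\tilde F\leq\tilde G\leq A_g\tilde\mu$, concluding that each of $\Psi$ and $\tilde\Psi$ has individually non-negative jumps, whereas the paper invokes $A_f=A_g$ with Lemma \ref{le2.2} applied to $f$ and only needs the non-negativity of the combined jump \eqref{3.2}.
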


\begin{proof}
For any solution $u$ of \eqref{2.3}, we set
\begin{equation*}
t_0:=\min_{\Omega}u,~t_1:=\max_{\Omega}u,~F_{\Omega}=\int_{\Omega}f(u)dx.
\end{equation*}
The result of Proposition \ref{pr3.1} will follow by integrating the differential inequalities \eqref{2.10}-\eqref{2.11} on the interval $(t_0,t_1)$ as shown below.

Let us consider the functions
\begin{equation}
\label{3.psi}
\Psi:=A_g f\mu^2-F^2\quad\mathrm{and}\quad \tilde\Psi:=A_g f\tilde \mu^2-\tilde F^2.
\end{equation}
By Lemma \ref{le2.1}, the functions $\Psi$ and $\tilde\Psi$ are continuous on any interval $[a,b]\subset\mathbb{R}\setminus\mathcal{C}(f,g)$. At the point $t\in\mathcal{C}(f,g)$, these functions may be discontinuous, and hence we have to treat it separately.

For each $a\in \mathcal{C}(f,g)$, we set $\Gamma_a:=\{u=a\}$, and claim
\begin{equation}
\label{3.2}
\begin{aligned}
&\Psi(a^+)+\tilde\Psi(a^+)-\Psi(a^-)-\tilde\Psi(a^-)\\
&=2f(a)|\Gamma_a|\left[(F(a)-\mu(a)A_g)+(\tilde \mu(a)A_g-\tilde F(a))\right]\\
&\geq0,
\end{aligned}
\end{equation}
where $\Psi(a^\pm):=\lim_{\varepsilon\to0}\Psi(a\pm\varepsilon)$, $\tilde\Psi(a^\pm):=\lim_{\varepsilon\to0}\tilde\Psi(a\pm\varepsilon).$

Indeed, according to the definition of $\Psi$ and $\tilde\Psi$, we have
\begin{equation}
\label{3.3}
\begin{aligned}
\Psi(a^+)-\Psi(a^-)=~&A_gf(a)[\mu(a)^2-(\mu(a)+|\Gamma_a|)^2]\\
&+(F(a)+f(a)|\Gamma_a|)^2-F(a)^2,
\end{aligned}
\end{equation}
and
\begin{equation}
\label{3.4}
\begin{aligned}
\tilde\Psi(a^+)-\tilde\Psi(a^-)=~&A_gf(a)[(\tilde \mu(a)+|\Gamma_a|)^2-\tilde \mu(a)^2]\\
&+\tilde F(a)^2-(\tilde F(a)+f(a)|\Gamma_a|)^2.
\end{aligned}
\end{equation}
Adding \eqref{3.3} and \eqref{3.4} together, we have
\begin{equation}
\label{3.5}
\begin{aligned}
&\Psi(a^+)+\tilde\Psi(a^+)-\Psi(a^-)-\tilde\Psi(a^-)\\
&~=2f(a)|\Gamma_a|\left[(F(a)-\mu(a)A_g)+(\tilde \mu(a)A_g-\tilde F(a))\right].
\end{aligned}
\end{equation}
It is easy to see that $A_g=A_f$ from the equation \eqref{2.3} and the Neumann boundary condition. Using Lemma \ref{le2.2}, we get
\begin{align*}
\frac{F(a)}{\mu(a)}\geq\frac{\int_{\Omega}f(u)dx}{|\Omega|}=A_f=A_g
\quad\mathrm{and}\quad
\frac{\tilde F(a)}{\tilde \mu(a)}\leq\frac{\int_{\Omega}f(u)dx}{|\Omega|}=A_f=A_g,
\end{align*}
which imply $F(a)-\mu(a)A_g\geq0$ and $\tilde \mu(a)A_g-\tilde F(a)\geq0$. As a consequence, the right hand side of \eqref{3.5} is non-negative. Thus the claim \eqref{3.2} is proved.

By \eqref{2.10} and \eqref{2.11}, we have both $\Psi$ and $\tilde\Psi$ are monotone increasing on the intervals $[t_0,t_1]\setminus\bigcup_{a\in \mathcal{C}(f,g)}(a-\varepsilon,a+\varepsilon)$ for $\varepsilon>0$. Then we get
\begin{equation}
\label{3.6}
\int_{t_0}^{t_1}\Psi'dt\leq \sum_{a\in\mathcal{C}(f,g)}(\Psi(a^{-})-\Psi(a^+))+F_{\Omega}^2-A_gf(t_0)|\Omega|^2.
\end{equation}
Similarly, we derive that
\begin{equation}
\label{3.7}
\int_{t_0}^{t_1}\tilde\Psi'dt\leq \sum_{a\in\mathcal{C}(f,g)}(\tilde\Psi(a^{-})-\tilde\Psi(a^+))+A_gf(t_1)|\Omega|^2-F_{\Omega}^2.
\end{equation}
By Proposition \ref{pr2.1}, \eqref{3.6}-\eqref{3.7} and \eqref{3.2}, we get
\begin{equation}
\label{3.8}
\begin{aligned}
A_g|\Omega|^2(f(t_1)-f(t_0))\geq~&\int_{t_0}^{t_1}\left(A_gf'(t)\mu^2(t)+2f(t)I_{\Omega}^2(\mu(t))\right)dt\\
&+\int_{t_0}^{t_1}\left(A_gf'(t)\tilde \mu^2(t)+2f(t)I_{\Omega}^2(\tilde \mu(t))\right)dt\\
=~&\int_{t_0}^{t_1}A_gf'(t)\left(\mu(t)^2+(|\Omega|-\mu(t))^2\right)dt\\
&+4\int_{t_0}^{t_1}f(t)I_{\Omega}^2(\mu(t))dt\\
=~&A_g|\Omega|^2\int_{t_0}^{t_1}f'(t)dt+4\int_{t_0}^{t_1}f(t)I_{\Omega}^2(\mu(t))dt\\
&+2A_g\int_{t_0}^{t_1}f'(t)\mu(t)(\mu(t)-|\Omega|)dt
\end{aligned}
\end{equation}
where we used $\tilde \mu=|\Omega|-\mu$ and $I_\Omega(\mu)=I_\Omega(|\Omega|-\mu)$. Since $f$ is differentiable, inequality \eqref{3.8} yields:
\begin{equation*}
\label{3.9}
0\geq A_g\int_{t_0}^{t_1}f'(t)\mu(t)(\mu(t)-|\Omega|)dt+2\int_{t_0}^{t_1}f(t)I_\Omega^2(\mu(t))dt,
\end{equation*}
or equivalently
\begin{equation*}
\label{3.10}
A_g\int_{t_0}^{t_1}f'(t)\mu(t)(|\Omega|-\mu(t))dt\geq 2\int_{t_0}^{t_1}f(t)I_\Omega^2(\mu(t))dt,
\end{equation*}
which proves \eqref{3.1}.
\end{proof}

\noindent {\bf Remark 3.1:} Instead of using the inequality $\mathcal{H}^1(\{u=t\}\cap\Omega)\geq I_{\Omega}(\mu(t))$ from \eqref{2.18}, we can keep the term $\mathcal{H}^1(\{u=t\})$ and repeat the arguments of deriving \eqref{3.1} to get
\begin{equation}
\label{3.11}
\frac{A_g}{2}\int_{-\infty}^\infty f'(t)\mu(t)(|\Omega|-\mu(t))dt\geq\int_{-\infty}^\infty f(t)[\mathcal{H}^1(\{u=t\}\cap\Omega)]^2dt.
\end{equation}

In section 4, we will get the uniqueness result asserted in Theorem \ref{th1.1} from the above inequality \eqref{3.11}.
\medskip

\noindent {\bf Remark 3.2:} When $\Omega$ is replaced by the mainfold $M$, we can also derive a counterpart result of Proposition \ref{pr3.1}. Then a similar result of \cite[Theorem 1.1]{l} and some related conclusions can be obtained by the same arguments (see \cite[section 3]{l} for more details).

\section{The uniqueness of the Neumann problem \eqref{1.6}}
In this section, we shall apply the inequalities established in the preceding section to prove the main results of the Neumann problem \eqref{1.6}.

With
\begin{equation}
\label{4.1}
f(u)=\lambda\frac{e^u}{\int_{\Omega}e^udx}\quad\mathrm{and}\quad g(u)=\beta u,
\end{equation}
the problem \eqref{2.3} is turned to be
\begin{equation}
\label{4.2}
\begin{cases}
\Delta u-\beta u+\lambda\frac{e^u}{\int_{\Omega}e^udx}=0~&\mathrm{in}~\Omega,\\
u>0~&\mathrm{in}~\Omega,\\
\partial_{\nu}u=0~&\mathrm{on}~\Omega.
\end{cases}
\end{equation}
It is easy to check that $f,g$ verify the condition \eqref{1.fga}. In the next result we shall prove
$$|\mathcal{C}(f,g)|=\left|\mathcal{C}(\lambda\frac{e^u}{\int_{\Omega}e^udx},\beta u)\right|<+\infty.$$

\begin{lemma}
Let $f,g$ be defined in \eqref{4.1}. Then
$$1\leq|\mathcal{C}(f,g)|\leq 2.$$
\end{lemma}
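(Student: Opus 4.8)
The plan is to analyze the equation $f(t) = g(t)$, namely
\begin{equation*}
\lambda\frac{e^t}{\int_{\Omega}e^udx} = \beta t,
\end{equation*}
for $t \geq 0$, and count its roots. Writing $C := \lambda/\int_{\Omega}e^udx > 0$ and $k := \beta > 0$, the set $\mathcal{C}(f,g)$ is exactly the zero set of the function $\varphi(t) := Ce^t - kt$ on $[0,\infty)$. The approach is elementary single-variable calculus: study $\varphi$ and its derivatives to pin down the number of sign changes.

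First I would compute $\varphi'(t) = Ce^t - k$ and $\varphi''(t) = Ce^t > 0$, so $\varphi$ is strictly convex on all of $\mathbb{R}$. A strictly convex function has at most two zeros, which immediately gives the upper bound $|\mathcal{C}(f,g)| \leq 2$. For the lower bound, I would use the fact that $u$ is a genuine solution of \eqref{4.2}: integrating the equation over $\Omega$ and applying the Neumann boundary condition shows $\int_\Omega g(u)\,dx = \int_\Omega f(u)\,dx$, i.e. $A_f = A_g$ (this identity is already invoked in the proof of Proposition \ref{pr3.1}). Since $f$ and $g$ are continuous and have the same average against the measure induced by $u$, the difference $f(u) - g(u)$ must change sign (or vanish) somewhere on the range of $u$; more directly, $\varphi$ takes both signs or touches zero, forcing at least one root. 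One clean way to guarantee a root is to observe that at $t=0$ we have $\varphi(0) = C > 0$, while by the mean-value/averaging identity $f(u) = g(u)$ cannot hold with strict inequality of one sign throughout, so the continuous function $\varphi$ attains a nonpositive value at some point in the range of $u$, and hence has a zero by the intermediate value theorem. This yields $|\mathcal{C}(f,g)| \geq 1$.

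The main obstacle, and the step requiring the most care, is the lower bound. The upper bound is a one-line convexity argument, but establishing existence of a root needs the structural input that $u$ solves the Neumann problem, since for an arbitrary constant $C$ the equation $Ce^t = kt$ may have no real solution at all (indeed $\varphi$ has no real zero when $C > k/e$). Thus I must use $A_f = A_g$ to rule out the root-free case: if $\varphi > 0$ everywhere on the range of $u$ then $f(u) > g(u)$ pointwise, contradicting $\int_\Omega f(u) = \int_\Omega g(u)$; likewise $\varphi < 0$ everywhere is impossible. Hence $\varphi$ vanishes somewhere in $[\min_\Omega u, \max_\Omega u] \subset [0,\infty)$, giving the claimed bound $1 \leq |\mathcal{C}(f,g)| \leq 2$. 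I would present the convexity bound first and then the averaging argument, since the latter is the conceptually essential ingredient tying the count to the fact that $u$ is an actual solution rather than an arbitrary function.
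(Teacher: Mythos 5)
Your proposal is correct and follows essentially the same route as the paper: the upper bound via strict convexity of $t\mapsto f(t)-\beta t$ (so at most two roots), and the lower bound via the integral identity $\int_\Omega f(u)\,dx=\beta\int_\Omega u\,dx$ from the Neumann boundary condition, which forbids $f(u)-\beta u$ from having one strict sign and forces a zero in $[\min_\Omega u,\max_\Omega u]$. The only cosmetic difference is that you make the convexity computation and the intermediate value step explicit, which the paper leaves implicit.
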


\begin{proof}
We divide our proof into two steps.
\medskip

\noindent Step 1. $|\mathcal{C}(f,g)|\geq1$. Using the boundary condition, we have
\begin{equation}
\label{4.3}
\int_{\Omega}f(u)dx=\beta\int_{\Omega}udx.
\end{equation}
Then we claim that $f(u(x))=\beta u(x)$ must happen for some $x\in\Omega.$ Otherwise, we have either $f(u(x))>\beta u(x)$ or $f(u(x))<\beta u(x)$ for all $x\in\Omega$, which leads to $\int_{\Omega}f(u)dx>\beta\int_{\Omega}udx$ or $\int_{\Omega}f(u)dx<\beta\int_{\Omega}udx$, it contradicts to \eqref{4.3}. Thus $\{u\mid f(u)=\beta u\}\neq\emptyset.$
\medskip

\noindent Step 2. $|\mathcal{C}(f,g)|\leq2$. Considering the function $h(x)=f(x)-\beta x$, which is convex by noticing that $f$ is a convex function. As a consequence, the function $h$ possesses at most two roots. Hence, we finish the proof.
\end{proof}

Next we prove the following theorem, which is equivalent to Theorem \ref{th1.1}.

\begin{theorem}
\label{th4.1}
Let $u$ be a non-constant solution of problem \eqref{4.2} with $\mathrm{osc}_{\partial\Omega}(u)=0$. Then the following inequality holds:
\begin{equation*}
\label{4.4}
\beta\int_{\Omega}udx>8\pi.
\end{equation*}
\end{theorem}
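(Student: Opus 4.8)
The plan is to exploit the special structure of the exponential nonlinearity, namely that $f'=f$, inside the integral inequality \eqref{3.11} furnished by Remark 3.1, and then to convert the lengths of level curves into enclosed areas via the sharp planar isoperimetric inequality. Writing $m=|\Omega|$ and $L(t):=\mathcal{H}^1(\{u=t\}\cap\Omega)$, and using $f'(t)=f(t)$ for $f(u)=\lambda e^u/\int_\Omega e^u$, inequality \eqref{3.11} reads
$$\frac{A_g}{2}\int_{t_0}^{t_1} f(t)\mu(t)(m-\mu(t))\,dt\;\ge\;\int_{t_0}^{t_1} f(t)L(t)^2\,dt,$$
where $t_0=\min_\Omega u$ and $t_1=\max_\Omega u$. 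Since $A_g=\frac{1}{m}\int_\Omega \beta u\,dx$, we have $A_gm=\beta\int_\Omega u\,dx$, so the theorem amounts to proving $A_gm>8\pi$.

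The geometric heart of the argument uses the hypothesis $\mathrm{osc}_{\partial\Omega}(u)=0$, i.e. $u\equiv c$ on $\partial\Omega$ for some constant $c\in[t_0,t_1]$. For $t>c$ one has $u<t$ in a full neighbourhood of $\partial\Omega$, so the superlevel set $\{u>t\}$ is compactly contained in $\Omega$ and $\partial\{u>t\}=\{u=t\}\cap\Omega$; the planar isoperimetric inequality then gives $L(t)^2\ge 4\pi\mu(t)$. Symmetrically, for $t<c$ the sublevel set $\{u<t\}$ is compactly contained in $\Omega$, whence $L(t)^2\ge 4\pi(m-\mu(t))$. Splitting the right-hand integral at $t=c$ therefore yields
$$\int_{t_0}^{t_1} fL^2\,dt\;\ge\;4\pi\Big(\int_{t_0}^{c} f(m-\mu)\,dt+\int_{c}^{t_1} f\mu\,dt\Big)=:4\pi R.$$

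Finally I combine the two bounds. Writing $\mu(m-\mu)=m(m-\mu)-(m-\mu)^2$ on $(t_0,c)$ and $\mu(m-\mu)=m\mu-\mu^2$ on $(c,t_1)$, the left-hand integral equals $mR-S$, where
$$S:=\int_{t_0}^{c} f(m-\mu)^2\,dt+\int_{c}^{t_1} f\mu^2\,dt.$$
Combining gives $\frac{A_g}{2}(mR-S)\ge 4\pi R$. For a non-constant solution $t_0<t_1$, and one checks directly that $R>0$ and $S>0$, since $f>0$ and the factors $m-\mu$, $\mu$ are strictly positive on the interiors of their ranges, so the relevant integrands are positive on a set of positive measure. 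As $A_g>0$ and $S>0$, we obtain $\frac{A_g}{2}mR=\frac{A_g}{2}(mR-S)+\frac{A_g}{2}S>4\pi R$, and dividing by $R>0$ yields $A_gm>8\pi$, i.e. $\beta\int_\Omega u\,dx>8\pi$, as claimed. The main obstacle is the geometric step: one must justify carefully, via the regularity afforded by Lemma \ref{le2.1} and Sard's theorem together with the constancy of $u$ on $\partial\Omega$, that for $t\ne c$ the level curve $\{u=t\}\cap\Omega$ is the entire boundary of a compactly contained super- or sub-level set, so that the sharp planar constant $4\pi$ is available in place of the weaker isoperimetric profile $I_\Omega$ appearing in \eqref{3.1}; this sharper constant is precisely what produces the threshold $8\pi$ and is the reason \eqref{3.11} rather than \eqref{3.1} is used here.
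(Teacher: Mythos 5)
Your proof is correct and follows essentially the same route as the paper: both arguments start from the inequality \eqref{3.11} of Remark 3.1, use the hypothesis $\mathrm{osc}_{\partial\Omega}(u)=0$ to see that for regular values $t\neq c$ the level curve $\{u=t\}$ bounds a compactly contained super- or sub-level set, and then invoke the sharp planar isoperimetric inequality to obtain the constant $4\pi$ in place of $I_\Omega$. The only (immaterial) difference is the final bookkeeping: the paper uses the pointwise ratio bound $f(t)L(t)^2/\bigl(f'(t)\mu(t)(|\Omega|-\mu(t))\bigr)\geq 4\pi/\max\{\mu,|\Omega|-\mu\}>4\pi/|\Omega|$, whereas you split the integral at $t=c$ and use the decomposition $\int f\mu(|\Omega|-\mu)\,dt=|\Omega|R-S$ with $S>0$; both yield $A_g|\Omega|>8\pi$.
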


\begin{proof}
In Remark 3.1, we have pointed out that the following inequality,
\begin{equation}
\label{4.5}
\frac{A_g}{2}\int_{-\infty}^{\infty}f'(t)\mu(t)(|\Omega|-\mu(t))dt\geq\int_{-\infty}^{\infty}f(t)[\mathcal{H}^1(\{u=t\}\cap\Omega)]^2dt.
\end{equation}
By the setting of $g$, we have $A_g=\beta\bar u$. Setting $u_0:=u|_{\partial\Omega}$, we consider the regular values of $u$:
\begin{equation*}
\mathrm{Reg}(u):=\{t\in\mathbb{R}:\nabla u(x)\neq0,~\forall x\in u^{-1}(t)\}.
\end{equation*}
From Sard's theorem we have $\mathbb{R}\setminus\mathrm{Reg}(u)$ has zero Lebesgue measure, which along with the implicit function theorem implies that
\begin{equation*}
\{u=t\}\Subset\Omega,~\{u=t\}~\mbox{is a 1-submanifold of class}~C^1~\mathrm{for}~ t\in \mbox{Reg}(u)\setminus\{u_0\}.
\end{equation*}
Next we claim that $\{u<t\}$ or $\{u>t\}$ must be contained in a domain enclosed by some connected branch of the set $\{u=t\}$. Indeed, for any $t\neq u_0,$ without loss of generality we may assume $t<u_0$. Then it is evident to see that $\{u<t\}\cap\partial\Omega=\emptyset$ and the claim holds. As a consequence
$$\mathcal{H}^1(\{u=t\}\cap\Omega)=\mathcal{H}^1(\{u=t\}).$$
Next, by using the isoperimetric inequality we have
\begin{equation}
\label{4.6}
[\mathcal{H}^1(\{u=t\})]^2\geq4\pi\min\{\mu(t),|\Omega-\mu(t)|\},~\forall t\in\mathrm{Reg}(u)\setminus\{u_0\}.
\end{equation}
From \eqref{4.6} we get
\begin{equation}
\label{4.7}
\frac{f(t)[\mathcal{H}^1(\{u=t\})]^2}{f'(t)\mu(t)(|\Omega|-\mu(t))}\geq\frac{4\pi}{\max\{\mu(t),|\Omega|-\mu(t)\}}>\frac{4\pi}{|\Omega|},
~t\in(t_0,t_1).
\end{equation}
Combining with the fact $\mathbb{R}\setminus\mathrm{Reg}(u)$ has zero Lebesgue measure, \eqref{4.5} and \eqref{4.7}, we have
\begin{equation*}
\label{4.8}
\beta\bar u>\frac{8\pi}{|\Omega|},
\end{equation*}
which yields the result.
\end{proof}

\textcolor{black}{Before proving Theorem \ref{th.rotation}, we shall give the following result on the general solutions of \eqref{1.6} on the unit disc.}

\begin{proposition}
\label{pr4.1new}
Let $\Omega$ be the unit disc. If $u$ is a non-constant solution solving the problem \eqref{1.6}, then
\begin{equation*}
\lambda>\frac{32}{\pi}.
\end{equation*}
\end{proposition}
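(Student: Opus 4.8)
The plan is to specialize the integral inequality \eqref{3.1} of Proposition \ref{pr3.1} to the unit disc $\Omega$ and to combine it with a sharp lower bound for the isoperimetric profile $I_\Omega$. Proposition \ref{pr3.1} is available here: the preceding lemma shows $\mathcal{C}(f,g)$ is finite, while \eqref{1.fga} holds for $f(u)=\lambda e^u/\int_\Omega e^u\,dx$ and $g(u)=\beta u$.

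First I would record two simplifications special to this problem. Since $f(t)=ce^{t}$ with $c=\lambda/\int_\Omega e^u\,dx$, one has $f'(t)=f(t)$. Integrating the first equation in \eqref{1.6} and using the Neumann condition gives $\int_\Omega u\,dx=\lambda/\beta$, so that, with $|\Omega|=\pi$,
$$A_g=\frac{1}{|\Omega|}\int_\Omega \beta u\,dx=\frac{\beta}{\pi}\cdot\frac{\lambda}{\beta}=\frac{\lambda}{\pi}.$$
Substituting $f'=f$ and $A_g=\lambda/\pi$ into \eqref{3.1} recasts it as
$$\frac{\lambda}{2\pi}\int_{-\infty}^{\infty} f(t)\,\mu(t)\big(\pi-\mu(t)\big)\,dt\ \ge\ \int_{-\infty}^{\infty} f(t)\,I_\Omega^2(\mu(t))\,dt.$$

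The heart of the matter is the sharp profile bound
$$I_\Omega^2(s)\ \ge\ \frac{16}{\pi^2}\,s(\pi-s),\qquad s\in[0,\pi],$$
with equality holding precisely at $s=\pi/2$ (besides the trivial endpoints $s=0,\pi$). I would derive this from the relative isoperimetric inequality for the disc: among regions $\omega\subset\Omega$ of prescribed area, the least relative perimeter $\mathcal{P}(\omega,\Omega)$ is attained by a region cut off by a circular arc meeting $\partial\Omega$ orthogonally, which degenerates at $s=\pi/2$ into a diameter of length $2$; this gives $I_\Omega(\pi/2)=2$ and hence equality in the display above. Parametrizing the extremal orthogonal arc by its angular opening reduces the bound to a one-variable comparison between its length and $\tfrac{4}{\pi}\sqrt{s(\pi-s)}$, which is checked directly and is strict for $s\neq\pi/2$. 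This isoperimetric input, together with the identification of its equality case, is the main obstacle; the rest is bookkeeping.

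Granting the profile bound, the conclusion follows quickly. Combining the recast form of \eqref{3.1} with $I_\Omega^2(\mu)\ge \tfrac{16}{\pi^2}\mu(\pi-\mu)$ and $f>0$ gives
$$\Big(\frac{\lambda}{2\pi}-\frac{16}{\pi^2}\Big)\int_{-\infty}^{\infty} f(t)\,\mu(t)\big(\pi-\mu(t)\big)\,dt\ \ge\ 0.$$
Since $u$ is non-constant, $\mu(t)\in(0,\pi)$ on a set of positive measure and the remaining integral is strictly positive, whence $\lambda\ge 32/\pi$. To upgrade this to a strict inequality, suppose $\lambda=32/\pi$; then equality must hold throughout, forcing $I_\Omega^2(\mu(t))=\tfrac{16}{\pi^2}\mu(t)(\pi-\mu(t))$ for a.e.\ $t$ with $\mu(t)\in(0,\pi)$. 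By the equality characterization this requires $\mu(t)\equiv\pi/2$ on such $t$, which is impossible because the monotone function $t\mapsto\mu(t)$ sweeps an entire subinterval of $(0,\pi)$ when $u$ is non-constant. This contradiction yields $\lambda>32/\pi$, recovering the Horstmann--Lucia threshold.
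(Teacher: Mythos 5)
Your argument is correct and follows essentially the same route as the paper: specialize the integral inequality of Proposition \ref{pr3.1} to the disc (using $f'=f$ and $A_g=\lambda/\pi$) and combine it with the sharp lower bound $I_\Omega^2(s)\ge \frac{16}{\pi|\Omega|}\,s(|\Omega|-s)$ for the isoperimetric profile. The only difference is presentational: the paper simply cites Burago--Zalgaller [18.1.3] for the \emph{strict} form of this profile inequality on $(0,|\Omega|/2)$ and reads off $\beta\bar u>\frac{32}{\pi|\Omega|}$ directly, whereas you re-derive the bound from the orthogonal-arc characterization of relative isoperimetric minimizers and then dispose of the borderline case $\lambda=32/\pi$ by analyzing equality at $s=\pi/2$ against the range of $\mu$.
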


\noindent{\em Proof of Proposition \ref{pr4.1new}.} \textcolor{black}{When $\Omega$ is a disc, it is proved in \cite[18.1.3]{bz} that
\begin{equation*}
\frac{I_{\Omega}^2(s)}{s(|\Omega|-s)}>\frac{16}{\pi|\Omega|},~\forall s\in(0,\frac{|\Omega|}{2}).
\end{equation*}
Using the condition $f\geq f'>0$, we have
\begin{equation*}
\frac{I_{\Omega}^2(s)f(s)}{s(|\Omega|-s)f'(s)}>\frac{16}{\pi|\Omega|},
\end{equation*}
which along with Proposition \ref{pr3.1} implies
\begin{equation*}
\beta\bar u>\frac{32}{\pi|\Omega|}.
\end{equation*}
On the other hand, we have $\lambda=\beta\int_{\Omega}u$. Thus, we get $\lambda>\frac{32}{\pi}$ and it proves the conclusion.}
\hfill $\square$
\medskip

In the following, we shall consider the case that $u$ is invariant under a rotation $R_{\theta}$. To study the class of functions which are invariant by a rotation $R_{\frac{2\pi}{m}}$, we recall a definition in \cite{hl,l}:
\medskip

\noindent {\bf Definition 4.1} (isoperimetric profile)  Given a group $G$ of isometries of $\Omega$, we consider the class of open subsets:
\begin{equation*}
\mathcal{O}_{\Omega}^G:=\{\omega\subset\mathcal{O}_{\Omega}:g(\omega)=\omega,\forall g\in G\}.
\end{equation*}
The ``G-isoperimetric profile" of $\Omega$ is defined as
\begin{equation*}
I_{\Omega}^G(s):=\inf\left\{\mathcal{H}^1(\partial\omega\cap\Omega):\omega\in\mathcal{O}_{\Omega}^G,\mathcal{H}^2(\omega)=s\right\},~\forall s\in(0,|\Omega|].
\end{equation*}
We set $I_{\Omega}^G(0)=0.$
\medskip

In the setting of ``G-isoperimetric profile", we can generalize the Proposition \ref{pr3.1} to the following result:
\begin{proposition}
\label{pr4.2}
Let $\Omega\Subset\mathbb{R}^2$ be a piecewise $C^1$-domain. If $G$ is a group of isometry of $\Omega$ and $u\in C^2(\overline\Omega)\cap H^G$ is a non-constant solution of \eqref{4.2}, then
\begin{equation*}
\label{4.9}
\frac{\beta\bar u}{2}\int_{-\infty}^{\infty}f'(t)\mu(t)(|\Omega|-\mu(t))dt\geq\int_{-\infty}^{\infty}f(t)[I_{\Omega}^G(\mu(t))]^2dt.
\end{equation*}
\end{proposition}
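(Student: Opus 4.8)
The plan is to repeat the proofs of Proposition \ref{pr2.1} and Proposition \ref{pr3.1} almost verbatim, replacing the ordinary isoperimetric profile $I_\Omega$ by the $G$-isoperimetric profile $I_\Omega^G$ wherever the isoperimetric inequality \eqref{2.18} was invoked. The single new ingredient is the observation that, because $u\in H^G$, every super- and sub-level set of $u$ is $G$-invariant and therefore belongs to the restricted class $\mathcal{O}_\Omega^G$, so that it competes against $I_\Omega^G$ rather than merely against $I_\Omega$.

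First I would record the $G$-invariance of the level sets. Since $g(u)=u$ for every $g\in G$, we have $g(\{u>t\})=\{u>t\}$ and $g(\{u<t\})=\{u<t\}$ for all $t$. By Sard's theorem and the implicit function theorem, for almost every $t$ the set $\{u>t\}$ is of class $C^1$, hence $\{u>t\}\in\mathcal{O}_\Omega^G$. The definition of the $G$-isoperimetric profile then upgrades \eqref{2.18} to
\begin{equation*}
\mathcal{H}^1(\{u=t\}\cap\Omega)=\mathcal{P}(\{u>t\},\Omega)\geq I_\Omega^G(\mu(t)).
\end{equation*}
Feeding this into the chain \eqref{2.17} exactly as before produces the $G$-analogues of the differential inequalities \eqref{2.10}--\eqref{2.11}, namely
\begin{equation*}
\frac{d}{dt}\left(A_g f\mu^2-F^2\right)\geq A_g f'(t)\mu^2(t)+2f(t)[I_\Omega^G(\mu(t))]^2,
\end{equation*}
together with the corresponding statement for $\tilde\mu$.

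Next I would run through the integration step of Proposition \ref{pr3.1} unchanged. The jump estimate \eqref{3.2} at each point of $\mathcal{C}(f,g)$ depends only on Lemma \ref{le2.2} and on the identity $A_g=A_f$ coming from the Neumann condition, neither of which involves the isoperimetric profile; hence all jumps remain non-negative. Integrating the two $G$-differential inequalities over $(t_0,t_1)$ and combining them as in \eqref{3.8}, using $\tilde\mu=|\Omega|-\mu$ together with the symmetry $I_\Omega^G(\mu)=I_\Omega^G(|\Omega|-\mu)$, collapses the boundary terms and yields precisely the asserted inequality with $A_g=\beta\bar u$.

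The main point that requires genuine verification, as opposed to mere copying, is the symmetry relation $I_\Omega^G(s)=I_\Omega^G(|\Omega|-s)$, the $G$-counterpart of \eqref{2.4}. This holds because the complement in $\Omega$ of a $G$-invariant admissible set is again $G$-invariant, has complementary measure $|\Omega|-s$, and shares the same relative boundary $\partial\omega\cap\Omega$; thus the infimum defining $I_\Omega^G$ is symmetric about $|\Omega|/2$. The only other technical care needed is to confirm that the level sets are admissible competitors in $\mathcal{O}_\Omega^G$ for almost every $t$, which is guaranteed by the same Sard-plus-implicit-function-theorem argument already used in the proof of Theorem \ref{th4.1}.
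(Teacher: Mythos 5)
Your proposal is correct and follows exactly the paper's route: the paper's own proof is precisely the observation that the level sets $\{u>t\}$ and $\{u<t\}$ lie in $\mathcal{O}_{\Omega}^G$ for a.e.\ $t$, so that the arguments of Propositions \ref{pr2.1} and \ref{pr3.1} go through with $I_{\Omega}$ replaced by $I_{\Omega}^G$. Your additional verification of the symmetry $I_{\Omega}^G(s)=I_{\Omega}^G(|\Omega|-s)$ is a sensible point of care that the paper leaves implicit, and your justification of it is sound.
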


\begin{proof}
We can follow the proof of Proposition \ref{pr3.1} step by step to prove Proposition \ref{pr4.2}, just by noticing that for almost every $t\in\mathbb{R}$ the level sets $\{u>t\}$ and $\{u<t\}$ belong to $\mathcal{O}_{\Omega}^G$.
\end{proof}

To continue our discussion, we need the following two lemmas. For the proof we refer the readers to \cite[Lemma 4.6-4.7]{hl}.
\begin{lemma}
\label{le4.2}
Let $B$ be a disc of $\mathbb{R}^2$ and set $G=\langle R_{2\pi/m}\rangle,~m\geq2$.

(a) If $\omega\in\mathcal{O}_B^G$ is such that
\begin{equation*}
\label{4.10}
(i)~\mathcal{H}^1(\overline\omega\cap\partial B)=0~\mathrm{or}~(ii)~\mathcal{H}^1(\overline{B\setminus\omega}\cap\partial B)=0,
\end{equation*}
then $\mathcal{H}^1(\partial\omega\cap B)\geq\min\{(4\pi\omega)^{\frac12},(4\pi[|B|-|\omega|])^{\frac12}\}.$

(b) If $\omega=\bigcup_{i=0}^{m-1}R_{\frac{2\pi}{m}}^i(\omega_0)$ (disjoint union) with $w_0$ satisfying
\begin{equation}
\label{4.11}
\omega_0\in\mathcal{O}_B,~\mathcal{H}^1(\overline{\omega_0}\cap\partial B)>0~\mathrm{and}~
\mathcal{H}^1(\overline{B\setminus\omega_0}\cap\partial B)>0,
\end{equation}
then
\begin{equation}
\label{4.12}
\mathcal{H}^1(\partial\omega\cap B)\geq nI_B\left(\frac{|\omega|}{m}\right).
\end{equation}
If $B\setminus\omega=\bigcup_{i=0}^{m-1}R_{\frac{2\pi}{m}}^i(\omega_0)$ with $\omega_0$ satisfying \eqref{4.11}, then \eqref{4.12} still holds.
\end{lemma}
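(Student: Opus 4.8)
The plan is to reduce both assertions to the classical planar isoperimetric inequality $\mathcal{H}^1(\partial\eta)\ge(4\pi|\eta|)^{1/2}$ together with the very definition of the relative profile $I_B$; the rotational symmetry enters only in part (b), through a clean decomposition of the relative perimeter into $m$ congruent pieces.

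For part (a), the key observation is that when a set (or its complement) meets $\partial B$ only in an $\mathcal{H}^1$-null set, its relative perimeter in $B$ agrees, up to a null set, with its full perimeter in $\mathbb{R}^2$. In case (i), since $\mathcal{H}^1(\overline\omega\cap\partial B)=0$, we have $\mathrm{Per}(\omega;\mathbb{R}^2)=\mathcal{H}^1(\partial\omega\cap B)+\mathcal{H}^1(\partial\omega\cap\partial B)=\mathcal{H}^1(\partial\omega\cap B)$, so the planar isoperimetric inequality gives $\mathcal{H}^1(\partial\omega\cap B)\ge(4\pi|\omega|)^{1/2}$. In case (ii) I would run the same argument on the complement $\eta:=B\setminus\omega$, using $\partial\omega\cap B=\partial\eta\cap B$ and $\mathcal{H}^1(\overline\eta\cap\partial B)=0$, to obtain $\mathcal{H}^1(\partial\omega\cap B)\ge(4\pi(|B|-|\omega|))^{1/2}$. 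Since each of the two hypotheses produces one of the two quantities inside the minimum, the asserted bound follows in either case.

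For part (b) I would first use that rotations are isometries preserving $B$, so that $|R_{2\pi/m}^i(\omega_0)|=|\omega_0|$ and the disjointness of the union forces $|\omega_0|=|\omega|/m$. The heart of the matter is the additivity $\mathcal{H}^1(\partial\omega\cap B)=\sum_{i=0}^{m-1}\mathcal{H}^1(\partial(R_{2\pi/m}^i\omega_0)\cap B)=m\,\mathcal{H}^1(\partial\omega_0\cap B)$, after which the definition of $I_B$ applied to $\omega_0\in\mathcal{O}_B$ with $|\omega_0|=|\omega|/m$ yields $\mathcal{H}^1(\partial\omega_0\cap B)\ge I_B(|\omega|/m)$ and hence the claim (the factor $n$ in the statement should read $m$). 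The role of hypothesis \eqref{4.11} is precisely to make this additivity legitimate: because $\omega_0$ and $B\setminus\omega_0$ each meet $\partial B$ in a set of positive length, every rotated copy is anchored to its own boundary arc and is separated inside $B$ from the neighbouring copies by a portion of $B\setminus\omega$, so that the relative boundaries of distinct copies are $\mathcal{H}^1$-essentially disjoint and no internal arc gets cancelled. The case in which $B\setminus\omega$, rather than $\omega$, decomposes is handled identically by passing to the complement.

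The main obstacle I anticipate lies in justifying this additivity: one must rule out the degenerate possibility that two consecutive rotated copies share a relative boundary arc inside $B$, which would make $\mathcal{H}^1(\partial\omega\cap B)$ strictly smaller than $m\,\mathcal{H}^1(\partial\omega_0\cap B)$ and destroy the bound. This is exactly where admissibility (each $\omega\in\mathcal{O}_B$ is the trace of a bounded $C^1$ domain, hence $\partial\omega$ carries no slit) must be combined with \eqref{4.11}. Turning the geometric picture of $m$ well-separated boundary-anchored slices into a rigorous statement is the delicate point, and is the reason the argument is developed in detail in \cite[Lemma 4.6--4.7]{hl}.
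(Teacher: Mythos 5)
The paper does not actually prove this lemma: it is quoted verbatim (typos included --- the $n$ in \eqref{4.12} should be $m$, and $(4\pi\omega)^{1/2}$ should be $(4\pi|\omega|)^{1/2}$) from Horstmann--Lucia, with the proof deferred to \cite[Lemmas 4.6--4.7]{hl}. Your reconstruction is the correct one and follows the same route as that reference: part (a) is the classical planar isoperimetric inequality applied to $\omega$ or to its complement, once one notes that the null-trace hypothesis kills the contribution of $\partial B$ to the full perimeter; part (b) is additivity of the relative perimeter over the $m$ congruent slices plus the definition of $I_B$ applied to $\omega_0$.

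The one point you flag as delicate --- that the sets $\partial(R_{2\pi/m}^i\omega_0)\cap B$ are $\mathcal{H}^1$-essentially disjoint --- is indeed the crux, and it is worth noting that it can be closed exactly along the line you indicate, without appealing to \eqref{4.11}. First, disjointness of the open copies alone already gives $\bigcup_i\bigl(\partial(R^i\omega_0)\cap B\bigr)\subseteq\partial\omega\cap B$, since a boundary point of one copy cannot lie in another (open) copy. Second, if two copies shared a relative boundary arc of positive length, they would have to sit on opposite sides of it (same side forces intersection), so $\omega$ would accumulate on both sides of the arc; but $\omega=\omega'\cap B$ with $\omega'$ a bounded $C^1$ domain, so every point of $\partial\omega'\cap B$ has $\omega$ on one side only, while a point of $\omega'\cap B$ belongs to $\omega$ --- either way a contradiction. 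Hence $\mathcal{H}^1(\partial\omega\cap B)\geq m\,\mathcal{H}^1(\partial\omega_0\cap B)\geq m\,I_B(|\omega|/m)$. One last caveat: in the complementary case of part (b) your argument literally yields $m\,I_B\bigl((|B|-|\omega|)/m\bigr)$ rather than $m\,I_B(|\omega|/m)$; these differ in general, so the final sentence of the statement should be read accordingly (in the application to Lemma \ref{le4.3} only the symmetrized quotient matters, so nothing is lost).
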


\begin{lemma}
\label{le4.3}
For a disc $B\subset\mathbb{R}^2$ and $G=\langle R_{2\pi/m}\rangle$ it holds
\begin{equation*}
\label{4.13}
\frac{[I_B^G(s)]^2}{s(|B|-s)}>\min\left\{\frac{4\pi}{|B|},\frac{16m}{\pi |B|}\right\}.
\end{equation*}
\end{lemma}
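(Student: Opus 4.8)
The plan is to bound the $G$-isoperimetric quotient on an arbitrary admissible competitor and then pass to the infimum. Fix $s\in(0,|B|)$ and let $\omega\in\mathcal{O}_B^G$ be any set with $\mathcal{H}^2(\omega)=s$; it suffices to produce a bound of the form $[\mathcal{H}^1(\partial\omega\cap B)]^2\ge c_s\,s(|B|-s)$ with a constant $c_s$ depending only on $s$ (not on $\omega$) and satisfying $c_s>\min\{4\pi/|B|,16m/(\pi|B|)\}$, because such a uniform strict bound is inherited by $I_B^G(s)$ after taking the infimum over all competitors. The decomposition I would use is precisely the dichotomy furnished by Lemma \ref{le4.2}: either the trace of $\omega$ on the boundary is negligible, in the sense that $\mathcal{H}^1(\overline\omega\cap\partial B)=0$ or $\mathcal{H}^1(\overline{B\setminus\omega}\cap\partial B)=0$, or else $\omega$ (respectively $B\setminus\omega$) genuinely wraps around the boundary and can be written as a disjoint union $\bigcup_{i=0}^{m-1}R_{2\pi/m}^i(\omega_0)$ of $m$ rotated copies of a fundamental piece $\omega_0$ obeying \eqref{4.11}.

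In the first case Lemma \ref{le4.2}(a) gives $[\mathcal{H}^1(\partial\omega\cap B)]^2\ge 4\pi\min\{s,|B|-s\}$, so that
\begin{equation*}
\frac{[\mathcal{H}^1(\partial\omega\cap B)]^2}{s(|B|-s)}\ge\frac{4\pi}{\max\{s,|B|-s\}}>\frac{4\pi}{|B|},
\end{equation*}
the last inequality being strict because $\max\{s,|B|-s\}<|B|$ for $s\in(0,|B|)$. In the second case Lemma \ref{le4.2}(b) gives $\mathcal{H}^1(\partial\omega\cap B)\ge m\,I_B(s/m)$, and here I would feed in the sharp disc estimate $I_B(\sigma)^2>\frac{16}{\pi|B|}\sigma(|B|-\sigma)$ from \cite{bz} (already used in the proof of Proposition \ref{pr4.1new}), valid at $\sigma=s/m\in(0,|B|/2)$ since $m\ge2$. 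This yields
\begin{equation*}
[\mathcal{H}^1(\partial\omega\cap B)]^2>m^2\cdot\frac{16}{\pi|B|}\cdot\frac{s}{m}\Big(|B|-\frac{s}{m}\Big)=\frac{16}{\pi|B|}\,s\,(m|B|-s),
\end{equation*}
and dividing by $s(|B|-s)$ together with the elementary inequality $\frac{m|B|-s}{|B|-s}\ge m$ (equivalent to $m\ge1$) produces the uniform strict bound $>\frac{16m}{\pi|B|}$.

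Combining the two cases, every competitor satisfies $\frac{[\mathcal{H}^1(\partial\omega\cap B)]^2}{s(|B|-s)}>\min\{\frac{4\pi}{|B|},\frac{16m}{\pi|B|}\}$, and since both case-bounds are strict and independent of $\omega$, the infimum defining $I_B^G(s)$ inherits the strict inequality, giving the claim. The step I expect to be the main obstacle is not the two calculations above but the verification that the two alternatives of Lemma \ref{le4.2} really exhaust all admissible $\omega$: a general $G$-invariant set with nontrivial boundary trace need not itself be a clean union of $m$ rotated fundamental pieces (it may, for instance, carry an additional $G$-invariant component around the centre), so one must argue that such configurations either reduce to the canonical form of Lemma \ref{le4.2}(b) after discarding interior components that only increase perimeter, or be treated by applying the dichotomy to $B\setminus\omega$ rather than to $\omega$. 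Making this structural reduction airtight — rather than the isoperimetric arithmetic — is where the real care lies.
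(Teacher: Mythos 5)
The paper does not actually prove this lemma: it is quoted verbatim from Horstmann--Lucia and the text explicitly refers the reader to \cite[Lemmas 4.6--4.7]{hl} for the proof, so there is no in-paper argument to compare against. Your derivation is the natural (and, as far as the cited source goes, the intended) one: split competitors according to the dichotomy of Lemma \ref{le4.2}, use part (a) to get the $4\pi/\max\{s,|B|-s\}>4\pi/|B|$ bound, and use part (b) together with the Burago--Zalgaller estimate $I_B(\sigma)^2>\frac{16}{\pi|B|}\sigma(|B|-\sigma)$ at $\sigma=s/m$ to get $\frac{16}{\pi|B|}\cdot\frac{m|B|-s}{|B|-s}\geq\frac{16m}{\pi|B|}$. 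Both computations are correct, the hypothesis $s/m<|B|/2$ needed for the disc estimate is satisfied since $m\geq2$ and $s<|B|$, and your remark that strictness survives the infimum is valid because in each case the strict lower bound is a fixed number depending only on $s$ (namely $4\pi\min\{s,|B|-s\}$ and $m^2I_B(s/m)^2$ respectively), not an $\omega$-dependent quantity.

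The one genuine gap is the one you yourself flag: Lemma \ref{le4.2} as stated is a pair of conditional estimates, not a structure theorem, and it does not assert that every $\omega\in\mathcal{O}_B^G$ with nontrivial boundary trace on both sides decomposes as a disjoint union of $m$ rotated copies of a single fundamental piece satisfying \eqref{4.11}. A $G$-invariant competitor can, for instance, consist of such an orbit together with a rotation-invariant annular component in the interior, or can have a boundary-touching connected component that is itself invariant under $R_{2\pi/m}$, and neither configuration is literally covered by alternative (b). Closing this requires the component-and-orbit analysis carried out in \cite{hl} (discarding interior pieces only adds perimeter while the relevant comparison must be made at the correct measure, and invariant boundary-touching components have to be handled separately), so as written your argument proves the lemma only for competitors falling into one of the two listed cases. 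Since you identify this as the missing step rather than assert it, the proposal is a faithful reconstruction of the cited proof strategy with one acknowledged structural lemma left to be supplied.
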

\medskip

With the above two lemmas, we can prove Theorem \ref{th.rotation}.
\medskip

\noindent {\em Proof of Theorem \ref{th.rotation}.} For the solution $u\in H^G$ with $G=\langle R_{2\pi/m}\rangle$, each of the level set $\Omega_t:=\{u>t\}$ is invariant under the action of the group. Then we deduce from Proposition \ref{pr4.2} that
\begin{equation*}
\label{4.14}
\frac{\beta\int_{\Omega}udx}{2|\Omega|}\int_{-\infty}^{\infty}f'(t)\mu(t)(|\Omega|-\mu(t))dt
\geq\int_{-\infty}^{\infty}f(t)[I_{\Omega}^G(\mu(t))]^2dt,
\end{equation*}
with $f(u)=\lambda\frac{e^u}{\int_{\Omega}e^udx}$ and $\Omega=B$. Using Lemma \ref{le4.3}, we get
\begin{equation*}
\label{4.15}
\frac{\lambda }{2|\Omega|}>
\begin{cases}
\frac{32}{\pi|\Omega|},~&\mbox{if}~m=2,\\
\frac{4\pi}{|\Omega|},~&\mbox{if}~m\geq3.
\end{cases}
\end{equation*}
Then
\begin{equation*}
\label{4.16}
\lambda>
\begin{cases}
\frac{64}{\pi},~&\mbox{if}~m=2,\\
8\pi,~&\mbox{if}~m\geq3,
\end{cases}
\end{equation*}
which finishes the whole proof. \hfill $\square$
\medskip

Next we shall apply Theorem \ref{th.rotation} to derive the optimal inequalities for the functional $J_{\lambda}(u)$:
\begin{align*}
J_{\lambda}(u)=\frac12\int_{\Omega}|\nabla u|^2dx+\frac\beta2\int_{\Omega}|u|^2dx-\lambda\log\left(\int_{\Omega}e^udx\right).
\end{align*}

\begin{proposition}
\label{pr4.3}
Let $J_{\lambda}^m(\cdot)$ be the restriction of $J_{\lambda}(\cdot)$ to the space $H^G$ with $G=\langle R_{2\pi/m}\rangle,~m\geq2$. Then the following hold:
\begin{enumerate}
  \item[(a)] The functional $J_{\lambda}^m(\cdot)$ is bounded from below whenever $\lambda\leq 8\pi.$
  \item[(b)] If $m\geq3$ and $\lambda\leq 8\pi$, the functional $J_{\lambda}^m(\cdot)$ admits a unique global minimizer given by $u\equiv\frac{\lambda}{\beta|\Omega|}$.
  \item[(c)] For $m=2$, the functional $J_{\lambda}^m(\cdot)$ admits a global minimizer for each $\lambda\leq8\pi$. Furthermore, the this global minimizer is unique and given by $\frac{\lambda}{\beta|\Omega|}$ whenever $\lambda\leq\frac{64}{|\Omega|}.$
\end{enumerate}
\end{proposition}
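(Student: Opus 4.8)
The plan is to regard $J_{\lambda}^m$ as the energy functional whose critical points in $H^G$ are, by the principle of symmetric criticality, exactly the $G$-invariant solutions of \eqref{4.2}, and to combine an improved symmetric Moser--Trudinger inequality (for boundedness and for the existence of a minimizer) with Theorem \ref{th.rotation} (for the uniqueness and constancy of the minimizer). Throughout I write $u_{0,\lambda}:=\frac{\lambda}{\beta|\Omega|}$ for the constant solution. For part (a) the starting point is the improved inequality valid for every $u\in H^G$ with $m\ge 2$,
\[
\log\int_\Omega e^{u}\,dx\le \frac{1}{16\pi}\int_\Omega|\nabla u|^2\,dx+\frac{1}{|\Omega|}\int_\Omega u\,dx+C,
\]
the constant $\frac{1}{16\pi}$ (in place of the $\frac{1}{8\pi}$ valid on all of $H^1(B)$) being forced by the symmetry: a concentrating $G$-invariant sequence can blow up only at the centre, at cost $8\pi$, or along a boundary orbit of $m$ points, at cost $4\pi m\ge 8\pi$, so the least concentration energy is $8\pi$. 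Inserting this bound into $J_\lambda^m$ gives
\[
J_\lambda^m(u)\ge\Big(\tfrac12-\tfrac{\lambda}{16\pi}\Big)\int_\Omega|\nabla u|^2\,dx+\frac\beta2\int_\Omega u^2\,dx-\frac{\lambda}{|\Omega|}\int_\Omega u\,dx-\lambda C,
\]
and for $\lambda\le 8\pi$ the gradient coefficient is nonnegative; completing the square in $\bar u=\frac1{|\Omega|}\int_\Omega u$, via $\int_\Omega u^2\ge|\Omega|\,\bar u^2$, bounds the rest below by $-\frac{\lambda^2}{2\beta|\Omega|}-\lambda C$. This proves (a).

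For the strictly subcritical range $\lambda<8\pi$ the coefficient $\frac12-\frac{\lambda}{16\pi}$ is positive, so a minimizing sequence is bounded in $H^1(B)$; weak lower semicontinuity of the quadratic part together with the continuity of $u\mapsto\log\int_\Omega e^{u}$ on $H^1$-bounded sets — itself a consequence of the compact embedding $H^G\hookrightarrow L^p(B)$ and the Moser--Trudinger inequality — produces a minimizer, which is a critical point and hence a $G$-invariant solution of \eqref{4.2}. By Theorem \ref{th.rotation} there is no non-constant such solution when $\lambda\le\Lambda_m$, so for $m\ge 3$ with $\lambda<8\pi=\Lambda_m$, and for $m=2$ with $\lambda<\frac{64}{|\Omega|}=\Lambda_2$, the minimizer must equal $u_{0,\lambda}$ and is unique. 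The critical values $\lambda=8\pi$ in (b) and $\lambda=\frac{64}{|\Omega|}$ in (c) are then reached by a limiting argument: below the threshold one has $J_\lambda^m(u)\ge J_\lambda^m(u_{0,\lambda})$ for all $u\in H^G$, and since $\lambda\mapsto J_\lambda^m(u)$ is affine while $u_{0,\lambda}$ depends continuously on $\lambda$, letting $\lambda$ increase to the threshold $\Lambda$ gives $J_{\Lambda}^m(u)\ge J_{\Lambda}^m(u_{0,\Lambda})$; thus $u_{0,\Lambda}$ attains the infimum, and any competing minimizer, being a critical point, is excluded by Theorem \ref{th.rotation} at $\lambda=\Lambda$. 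This settles (b) entirely and the uniqueness clause of (c).

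The one genuinely delicate point is the bare existence assertion in (c) at the critical mass $\lambda=8\pi$, where the minimizer need not be the constant and the limiting argument above no longer identifies it. Here I would apply the concentration--compactness dichotomy to a minimizing sequence in $H^{\langle R_\pi\rangle}$: either it is precompact, yielding a minimizer, or it concentrates, and $G$-invariance confines the concentration to the centre or to an antipodal pair on $\partial B$, each carrying energy exactly $8\pi$. The task is then to verify that $\inf_{H^{\langle R_\pi\rangle}}J_{8\pi}^{2}$ lies strictly below this bubbling value, which rules out the second alternative and forces attainment; this can be achieved by testing $J_{8\pi}^2$ against a suitable family of symmetric functions to produce a competitor with energy strictly below the concentration level, in the spirit of the extremal-existence arguments of \cite{hl}. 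I expect this critical-threshold existence for $m=2$ to be the main obstacle, the remaining assertions being comparatively soft consequences of part (a) and Theorem \ref{th.rotation}.
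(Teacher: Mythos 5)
Your overall architecture coincides with the paper's: a Moser--Trudinger-type lower bound gives (a), the direct method produces a minimizer, and Theorem \ref{th.rotation} identifies the minimizer with the constant $\frac{\lambda}{\beta|\Omega|}$ below the thresholds $\Lambda_m$. The genuine gap lies in how you obtain the key inequality of part (a). You assert the improved symmetric inequality $\log\int_\Omega e^u\le\frac{1}{16\pi}\|\nabla u\|_2^2+\bar u+C$ on $H^G$ and justify the constant $\frac{1}{16\pi}$ only by a heuristic blow-up count (concentration at the centre versus at a boundary orbit of $m$ points). The inequality is true, but as written it is an assertion, not a proof, and it is precisely the content of the whole of part (a). The paper instead proves it by decomposing $B$ into $m$ fundamental sectors $\Pi$ of angle $2\pi/m$ (and, for $m=2,3$, into $2m$ half-sectors after a Steiner symmetrization), writing $J_\lambda(u)$ as $m$ times the analogous functional on $\Pi$ with parameter $\lambda/m$ up to an additive constant, and applying the Chang--Yang/Cianchi Moser--Trudinger inequality \eqref{4.18} for piecewise smooth domains with minimal interior angle $\theta_\Pi$; the condition $\lambda/m\le 4\theta_\Pi$ then yields exactly $\lambda\le 8\pi$. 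To complete your argument you must either carry out such a fundamental-domain decomposition or cite a proved version of your improved inequality (a statement of this type is in \cite{hl}); the concentration heuristic alone does not suffice.

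Two further remarks. Your continuity-in-$\lambda$ argument for reaching the endpoints (passing to the limit in $J_\lambda(u)\ge J_\lambda(u_{0,\lambda})$ as $\lambda\uparrow\Lambda$, using that $\lambda\mapsto J_\lambda(u)$ is affine and $u_{0,\lambda}$ varies continuously) is a clean way to obtain attainment and uniqueness at $\lambda=\Lambda_m$; the paper does not spell this out and simply invokes the standard variational argument for existence at all $\lambda\le 8\pi$ together with Theorem \ref{th.rotation}. On the other hand, the existence of a minimizer for $m=2$ at the critical value $\lambda=8\pi$ (the first clause of (c)) is exactly the case your limiting argument cannot reach, since for $\lambda>\frac{64}{|\Omega|}$ the minimizer need not be constant, and your concentration--compactness strategy for it is only sketched, with the decisive strict-inequality step left unverified. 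You have correctly located the real difficulty --- the paper itself is terse at this point --- but as it stands this part of (c) remains incomplete in your proposal.
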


\begin{proof}
Let us first prove that when $\lambda\leq 8\pi$ we may find a constant $C>0$ depending only on $\lambda$ and $|\Omega|$ such that
\begin{align*}
J_{\lambda}(u)\geq-C,~\forall u\in H^{\langle R_{2\pi/m}\rangle}.
\end{align*}
We need the following inequality. For a bounded domain $\Omega$ of $\mathbb{R}^2$ whose boundary is $C^2$-piecewise with finite number of vertexes, denote by $\theta_{\Omega}$ the minimum interior angle among all the vertexes. Then we have the following Moser-Trudinger inequality (cf. \cite{cy,c}):
\begin{equation*}
\label{4.17}
\int_{\Omega}e^{2\theta_{\Omega}\left(\frac{u-\bar u}{\|\nabla u\|_2}\right)^2}dx\leq C_0,~\forall u\in H^1(\Omega).
\end{equation*}
As a consequence, we have
\begin{equation*}
\begin{aligned}
\log\int_{\Omega}e^{u}dx&=\frac{1}{|\Omega|}\int_{\Omega}udx+\log\int_{\Omega}e^{u-\bar u}dx\\
&\leq\frac{1}{|\Omega|}\int_{\Omega}udx+\log\int_{\Omega}e^{\frac{1}{8\theta_{\Omega}}\|\nabla u\|_2^2
+2\theta_{\Omega}\frac{(u-\bar u)^2}{\|\nabla u\|_2^2}}dx\\
&\leq\frac{\beta}{8\theta_{\Omega}}\int_{\Omega}|u|^2dx+\frac{1}{8\theta_{\Omega}}\int_{\Omega}|\nabla u|^2dx+\frac{2\theta_{\Omega}}{\beta|\Omega|}+\log C_0,
\end{aligned}
\end{equation*}
and it implies that
\begin{equation}
\label{4.18}
\frac12\int_{\Omega}|\nabla u|^2dx+\frac\beta2\int_{\Omega}|u|^2dx-4\theta_{\Omega}\log\left(\int_{\Omega}e^udx\right)\geq -C_1,~u\in H^1(\Omega).
\end{equation}

Now fixing $P\neq 0$, we consider a fundamental domain
\begin{equation*}
\label{4.19}
\Pi(P,\theta):=\left\{x\in B\setminus\{0\}:0<\arccos\left(\frac{x\cdot OP}{\|x\|\|OP\|}\right)<\frac{\theta}{2}\right\},
\end{equation*}
and split $\Pi(P,\theta)$ as follows:
\begin{equation*}
\label{4.20}
\Pi_+(P,\theta):=\left\{x\in\Sigma:x\cdot R_{\frac{\pi}{2}}(OP)>0\right\},
\end{equation*}
\begin{equation*}
\label{4.21}
\Pi_-(P,\theta):=\left\{x\in\Sigma:x\cdot R_{\frac{\pi}{2}}(OP)<0\right\}.
\end{equation*}
In $\Pi$, for any $u\in H^{\langle R_{2\pi/m}\rangle}$, we have
\begin{equation}
\label{4.22}
\begin{aligned}
J_{\lambda}(u):&=\frac12\int_{B}|\nabla u|^2dx+\frac\beta2\int_{B}|u|^2dx-\lambda\log\left(\int_{B}e^udx\right)\\
&=m\left(\frac12\int_{\Pi}|\nabla u|^2dx+\frac\beta2\int_{\Pi}|u|^2dx-\frac{\lambda}{m}\log\left(\int_{\Pi}e^udx\right)\right).
\end{aligned}
\end{equation}
For the domain $\Pi$, we apply the inequality \eqref{4.18} with $\theta_{\Pi}$  given by
\begin{equation*}
\label{4.23}
\theta_{\Pi}=\begin{cases}\frac{\pi}{2},~&\mbox{if}~m=2,3,\\\frac{2\pi}{m},~&\mbox{if}~m\geq4.\end{cases}
\end{equation*}
Then we see that \eqref{4.22} is uniformly bounded from below if
\begin{equation*}
\label{4.24}
\frac{\lambda}{m}\leq 4\theta_{\Pi}=\begin{cases}2\pi,~&\mbox{if}~m=2,3,\\\frac{8\pi}{m},~&\mbox{if}~m\geq4,\end{cases}
\end{equation*}
which is equivalent to
\begin{equation*}
\label{4.25}
\lambda\leq\begin{cases}2m\pi,~&\mbox{if}~m=2,3,\\ 8\pi,~&\mbox{if}~m\geq4.\end{cases}
\end{equation*}
This proves that \eqref{4.22} is uniformly bounded from below provided $\lambda\leq 8\pi$ and $m\geq 4$.

For $m=2,3$, using the Steiner symmetrization, we can assume that the function $u$ is radial. As a consequence, the fundamental domain $\Pi$ is replaced by $\Pi_+$ and
\begin{equation}
\label{4.26}
\begin{aligned}
J_{\lambda}(u):&=\frac12\int_{B}|\nabla u|^2dx+\frac\beta2\int_{B}|u|^2dx-\lambda\log\left(\int_{B}e^udx\right)\\
&=2m\left(\frac12\int_{\Pi_+}|\nabla u|^2dx+\frac\beta2\int_{\Pi_+}|u|^2dx-\frac{\lambda}{2m}\log\left(\int_{\Pi}e^udx\right)\right).
\end{aligned}
\end{equation}
Notice that in $\Pi_+$, the constant $\theta_{\Pi_+}=\frac{\pi}{m}.$ Using \eqref{4.18}, we see \eqref{4.26} is bounded from below if
\begin{equation*}
\label{4.27}
\frac{\lambda}{2m}\leq4\frac{\pi}{m},
\end{equation*}
which implies $\lambda\leq 8\pi$. Thus we have deduced that the functional $J_{\lambda}(u)$ restricted to the space $H^G$ is bounded from below when $\lambda\leq8\pi.$ From the standard variational argument we can find $u_{\min}\in H^{\langle R_{2\pi/m}\rangle}$ such that
\begin{equation*}
\label{4.28}
J_{\lambda}(u)\geq J_{\lambda}(u_{\min}),~\forall u\in H^{\langle R_{2\pi/m}\rangle}.
\end{equation*}
Then the remaining assertion of Proposition \ref{pr4.3} is a direct consequence of Theorem \ref{th.rotation}.
\end{proof}

Now we ready to prove Theorem \ref{th1.4}.
\medskip


\noindent {\em Proof of Theorem \ref{th1.4}.} Applying Proposition \ref{pr4.3} and the arguments in \cite[Theorem 1.1]{nsy}, we can conclude that any solution $(v,u)\in H^G\times\mathring{H}^G$ of \eqref{1.1} is globally defined whenever $\lambda<\Lambda_m$ and show that
\begin{equation*}
\lim_{t\to\infty}\sup\left(\|v(x,t)\|_{L^\infty(\Omega)}+\|u(x,t)\|_{L^\infty(\Omega)}\right)<\infty.
\end{equation*}
From the result \cite[Theorem 1.1]{fp}, we get that the classical solutions converge in $C^1(\overline B)$ as $t\to\infty$ to a stationary solution. Particularly, this convergence holds true for a subsequence $(t_k)_{k\in\mathbb{N}}$ in the sense that
$$v(t_k)\to\frac{e^{u_\infty}}{\int_{\Omega}e^{u_\infty}dx}~\mathrm{in}~L^2(B)~\mathrm{as}~t_k\to\infty,$$
and
$$u(t_k)\to u_{\infty}~\mathrm{in}~H^1(B)~\mathrm{as}~t_k\to\infty,$$
where $u_{\infty}$ is a solution of \eqref{1.6}. It is known by Theorem \ref{th.rotation} that $u=\frac{\lambda}{\beta\pi}$ is the only solution to the problem \eqref{1.6} provided $\lambda<\Lambda_m$. Thus, a solution $(v,u)\in H^G\times\mathring{H}^G$ of system \eqref{1.1} must converge to the constant solution $(\frac{\lambda}{\pi},\frac{\lambda}{\beta\pi})$ as $t\to\infty$ when $\lambda<\Lambda_m.$ Finally, if $\lambda<8\pi$, the convergence to the constant equilibrium of any radial solution of \eqref{1.1} results from Remark 1.1. \hfill $\square$

\section{Uniqueness of the Dirichlet problem \eqref{1.dirichlet}}
In this section, we shall provide a complete proof of Theorem \ref{th.dirichlet}. Indeed, we may consider a more general problem:

\begin{equation}
\label{5.1}
\begin{cases}
\Delta u-g(u)+\lambda\frac{e^u}{\int_{\Omega}e^udx}=0~&\mathrm{in}~\Omega,\\
u>0~&\mathrm{in}~\Omega,\\
u=0~&\mathrm{on}~\partial\Omega,
\end{cases}
\end{equation}
where $g$ satisfies that $g(x),g'(x)>0$ for $x>0.$

Concerning the problem \eqref{5.1}, we obtain the following conclusion,
\begin{theorem}
\label{th5.1}
Let $\Omega$ be an open bounded and simply connected set. Assuming that $u_i\in C^2(\Omega)\cap C(\overline{\Omega}) (i=1,2)$ solve the equation \eqref{5.1} and  $u_1\not\equiv u_2$, then $\lambda>8\pi.$
\end{theorem}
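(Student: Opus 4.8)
The plan is to argue by contraposition in the spirit of \cite{gm0,gm}: assuming two distinct solutions $u_1,u_2$ of \eqref{5.1}, I would deduce $\lambda>8\pi$ from the Sphere Covering Inequality. The first step is to recast \eqref{5.1} in Liouville form. Setting
\[
v_i=u_i+\log\frac{\lambda}{\int_\Omega e^{u_i}\,dx},\qquad i=1,2,
\]
one has $\Delta v_i+e^{v_i}=g(u_i)$ in $\Omega$ with $\int_\Omega e^{v_i}\,dx=\lambda$, while on $\partial\Omega$ the function $v_i$ reduces to the constant $c_i:=\log(\lambda/\int_\Omega e^{u_i}\,dx)$. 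The crucial structural gain is that the right-hand side $g(u_i)$ is nonnegative (since $u_i>0$ and $g>0$ on $(0,\infty)$); this is exactly the one-sided curvature bound under which the Alexandrov--Bol inequality, and hence the covering machinery of \cite{gm0,gm}, is available for the conformal metric $e^{v_i}|dx|^2$. The monotonicity $g'>0$ would then be used to compare the two defect terms $g(u_1)$ and $g(u_2)$ so as to place them on the correct side of the hypotheses of the inequality.

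The second step is to produce the domains on which to apply the inequality. Working in the original variable, where $u_1=u_2=0$ on $\partial\Omega$, the difference $u_1-u_2$ vanishes on $\partial\Omega$ and on every interior nodal curve, so on the two nodal sets $\Omega^{\pm}:=\{x\in\Omega:\pm(u_1-u_2)>0\}$ one has the clean boundary matching $u_1=u_2$ on $\partial\Omega^{\pm}$. Before this is useful I would first check that $u_1-u_2$ genuinely changes sign, so that both $\Omega^{+}$ and $\Omega^{-}$ are nonempty. Integrating the equation satisfied by $w:=u_1-u_2$ over $\Omega$, the two nonlocal terms each integrate to $\lambda$ and cancel, leaving $\int_{\partial\Omega}\partial_\nu w=\int_\Omega[g(u_1)-g(u_2)]$. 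If $w\ge 0$ (resp. $\le 0$) throughout with $w\not\equiv 0$, then, since $w$ vanishes on $\partial\Omega$, the left-hand side is $\le 0$ (resp. $\ge 0$), whereas the strict monotonicity of $g$ forces the right-hand side to be $>0$ (resp. $<0$), a contradiction. A further technical point to settle here is that the relevant connected components of $\Omega^{\pm}$ are simply connected, which is required by the inequality and which I expect to follow from a maximum-principle argument ruling out interior ``islands''.

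On each nodal domain I would then apply the (generalized) Sphere Covering Inequality to the pair $v_1,v_2$, obtaining $\int_{\Omega^{\pm}}(e^{v_1}+e^{v_2})\,dx\ge 8\pi$. Adding the two estimates and using $\Omega^{+}\cup\Omega^{-}=\Omega$ up to a null set together with $\int_\Omega e^{v_i}\,dx=\lambda$ yields $2\lambda=\int_\Omega(e^{v_1}+e^{v_2})\,dx\ge 16\pi$, i.e. $\lambda\ge 8\pi$, and the strict inequality $\lambda>8\pi$ follows because equality in the covering inequality cannot occur on a proper subdomain. The factor two obtained by summing over \emph{both} nodal domains is precisely what upgrades the naive threshold $4\pi$ to the desired $8\pi$; this is exactly the mechanism that fails for the Dirichlet problem \eqref{1.dirichlet} only at $\lambda=8\pi$, consistently with Theorem \ref{th.dirichlet}.

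The hard part will be that the two normalized functions match on the nodal boundaries only up to the constant $c_1-c_2$: when $\int_\Omega e^{u_1}\,dx\neq\int_\Omega e^{u_2}\,dx$ one has $v_1-v_2\equiv c_1-c_2\neq 0$ there, so the plain boundary-matching hypothesis of \cite{gm0} is violated, and shifting one function by this constant re-introduces either a curvature defect of the wrong sign or a spurious multiplicative factor in the masses, either of which destroys the clean $16\pi$. Overcoming this is where the refined inequality of \cite{gm} is essential: it must be invoked in a form that tolerates both the constant boundary discrepancy and the extra nonnegative term $g(u_i)$, and the bookkeeping arranged so that the full $16\pi$ is recovered rather than a strictly smaller constant. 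Reconciling the non-Liouville term $g$ and the mismatched masses with the covering inequality is thus the main difficulty; once it is handled, the passage to the threshold and the strictness are routine.
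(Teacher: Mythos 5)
Your reduction to the Liouville form $\Delta v_i+e^{v_i}=g(u_i)$ with $\int_\Omega e^{v_i}\,dx=\lambda$, and your argument that $u_1-u_2$ must change sign, both match the paper. But the core of your proposal --- applying the Sphere Covering Inequality on the two nodal domains $\Omega^{\pm}$ of $u_1-u_2$ and summing to get $2\lambda\ge 16\pi$ --- is left with a gap that you flag but do not close, and as set up it cannot be closed by a routine patch. The difficulty is not only that $v_1-v_2\equiv c_1-c_2\neq 0$ on $\partial\Omega^{\pm}$: the two orderings the covering inequality needs live on \emph{different} decompositions. The curvature defects satisfy $g(u_1)\ge g(u_2)$ exactly on $\{u_1>u_2\}$, while the ordering $v_1\ge v_2$ and the boundary matching $v_1=v_2$ hold on the nodal domains of $v_1-v_2=(u_1-u_2)+(c_1-c_2)$; unless $\int_\Omega e^{u_1}\,dx=\int_\Omega e^{u_2}\,dx$ these two families of sets differ, so on each candidate domain one of the hypotheses of the inequality fails. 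Shifting $v_2$ by $c_1-c_2$ restores the matching but multiplies its mass by $e^{c_1-c_2}$ and perturbs its curvature defect, destroying both the ordering and the clean $16\pi$. Add the unverified simple connectivity of the nodal components, and the proposal as written is not a proof.

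The paper avoids all of this by following \cite{gm} rather than the nodal-domain form of \cite{gm0}: it labels the solutions so that $\int_\Omega g(u_2)\,dx\ge\int_\Omega g(u_1)\,dx$ and proves only the \emph{integrated} comparison $\int_{\{v_2-v_1>t\}}(g(u_2)-g(u_1))\,dx\ge 0$ for every $t$ (inequality \eqref{5.11}), which follows from the monotonicity of $g$ with no pointwise ordering needed. This single inequality allows one to symmetrize the global difference $\phi=v_2-v_1$ over all of $\Omega$ with respect to the measures $e^{v_1}dx$ and $e^{U_{\theta}}dx$ (the constant boundary value $t_b=(v_2-v_1)|_{\partial\Omega}$ is handled by treating the ranges $t>t_b$ and $t<t_b$ separately), yielding $\int_{\partial B_r}|\nabla(U_{\theta}+\phi)|\,d\sigma\le\int_{B_r}e^{U_{\theta}+\phi}\,dx$ for a.e.\ $r$. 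For $\lambda<8\pi$, Lemma 5.E then forces $U_{\theta}(R)\le\psi(R)$, contradicting $\phi(R)<0$; the borderline case $\lambda=8\pi$ is excluded in a second step via Lemmas 5.C--5.D together with the strict positivity $g(u_i)>0$. If you want to salvage the nodal-domain scheme you would have to establish a version of the covering inequality that tolerates both the constant boundary mismatch and the misaligned defect ordering --- which is essentially what the paper's global symmetrization accomplishes.
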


\textcolor{black}{We prove the above result by following the same spirit as treated for the Neumann problem \eqref{1.6}. Precisely, we focus on the difference between the integration of $g(u_2)$ and $g(u_1)$ with respect to the level set of $u_2-u_1,$ instead of the pointwise comparison between $g(u_2)$ and $g(u_1).$} To begin with the argument, let us recall the classical Bol's isoperimetric inequality, see \cite{ba,bl,b} and \cite{bz} for a detailed history of the Bol's inequality.
\medskip

\noindent {\bf Theorem 5.A.} {\em Let $\Omega\subset\mathbb{R}^2$ be a simply-connected and assume $w\in C^2(\Omega)\cap C(\overline {\Omega})$ satisfies
\begin{equation*}
\label{5.2}
\Delta w+e^w\geq0,~\int_{\Omega}e^wdx\leq8\pi.
\end{equation*}
Then for every $\omega\Subset\Omega$ of class $C^1$ the following inequality holds
\begin{equation*}
\left(\int_{\partial\omega}e^{\frac{w}{2}}dx\right)^2\geq\frac12\left(\int_{\omega}e^wdx\right)
\left(8\pi-\int_{\omega}e^wdx\right).
\end{equation*}
Moreover the above inequality is strict if $\Delta w+e^w>0$ somewhere in $\omega$ or $\omega$ is not simply connected.}
\medskip

For $\theta>0$, the function defined by
\begin{equation}
\label{5.3}
U_{\theta}(x)=-2\log\left(1+\frac{\theta^2|x|^2}{8}\right)+2\log\theta
\end{equation}
satisfies the following property
$$\Delta U_{\theta}+e^{U_{\theta}}=0,~\mathrm{and}~
\left(\int_{\partial B_r}e^{\frac{U_{\theta}}{2}}dx\right)^2=\frac12\left(\int_{B_r}e^{U_{\theta}}dx\right)
\left(8\pi-\int_{B_r}e^{U_{\theta}}dx\right),$$
for all $r>0$ and $\theta>0,$ where $B_r$ denotes the ball of radius $r$ centered at the origin in $\mathbb{R}^2$.

Next we shall recall some facts about the rearrangement with the measures. Such discussions have been detailed in \cite{ba,bl,bl1,ccl,gm0,s}, we shall sketch the process here only. For any function $\phi\in C^2(\overline{\Omega})$ which is constant on $\partial\Omega$ can be equimeasurably rearranged with respect to the measures $e^{u}dx$ and $e^{U_{\theta}}dx$, where $u$ is a function satisfying that $\Delta u+e^{u}\geq0$ and $U_{\theta}$ is defined in \eqref{5.3}. For any $t>\min_{x\in\overline{\Omega}}\phi$, we define $\Omega_t^*$ be a ball centered at the origin such that
\begin{equation*}
\int_{\Omega_t^*}e^{U_{\theta}}dx=\int_{\{\phi>t\}}e^udx.
\end{equation*}
Then we define $\phi^*:\Omega^*\to\mathbb{R}$ by
$$\phi^*(x):=\sup\{t\in\mathbb{R}:x\in\Omega_t^*\},$$
which gives an equimeasurable rearrangement of $\phi$ with respect to the measures $e^{u}dx$ and $e^{U_{\theta}}dx$:
$$\int_{\{\phi^*>t\}}e^{U_{\theta}}dx=\int_{\{\phi>t\}}e^{u}dx,~\forall t>\min_{x\in\overline{\Omega}}\phi.$$
For functions $\phi$ and $\phi^*$, by using Proposition 5.A, the following conclusions hold.
\medskip

\noindent{\bf Proposition 5.B.} \cite{gm0} {\em Let $u\in C^{0,1}(\overline{\Omega})$ satisfy
\begin{equation*}
\Delta u+e^u\geq0~\mbox{in}~\Omega,
\end{equation*}
and let $U_{\theta}$ be given in \eqref{5.3}. Suppose $\phi\in C^2(\overline{\Omega})$ and $\phi\equiv C$ on $\partial\Omega$. Let $\phi^*$ be the equimeasurable symmetric rearrangement of $\phi$ with respect to the measures $e^{u}dx$ and $e^{U_{\theta}}dx$, then it holds for all $t>\min_{x\in\overline{\Omega}}\phi(x)$ that
\begin{equation*}
\int_{\{\phi^*=t\}}|\nabla\phi^*|d\sigma\leq \int_{\{\phi=t\}}|\nabla\phi|d\sigma.
\end{equation*}}

The following two lemmas are the consequences of the Bols's inequality and reversed Bol's inequality in the radial setting respectively.

\medskip
\noindent{\bf Lemma 5.C.} \cite{gm0} {\em Assume that $\psi\in C^{0,1}(\overline{B_R})$ is a strictly decreasing, radial, Lipschitz function, and satisfies
\begin{equation}
\label{5.4}
\int_{\partial B_r}|\nabla\psi|d\sigma\leq\int_{B_r}e^{\psi}dx
\end{equation}
a.e. $r\in(0,R)$ and $\psi=U_{\theta_1}=U_{\theta_2}$ for some $\theta_2>\theta_1$ on $\partial B_R$. Then there holds
\begin{equation}
\label{5.5}
\mbox{either}\quad \int_{B_R}e^{\psi}dx\leq\int_{B_R}e^{U_{\theta_1}}dx\quad\mbox{or}\quad\int_{B_R}e^{\psi}dx\geq \int_{B_R}e^{U_{\theta_2}}dx.
\end{equation}
Moreover if the inequality in \eqref{5.4} is strict in a set with positive measure in $(0,R)$, then the inequalities in \eqref{5.5} are also strict.}
\medskip

\noindent{\bf Lemma 5.D.} \cite{gm} {\em Assume that $\psi\in C^{0,1}(\mathbb{R}^2\setminus B_R)$ is a strictly decreasing, radial, Lipschitz function, and satisfies
\begin{equation}
\label{5.6}
\int_{\partial B_r}|\nabla\psi|d\sigma\leq8\pi-\int_{\mathbb{R}^2\setminus B_r}e^{\psi}dx
\end{equation}
a.e. $r\in(R,+\infty)$ and $\psi=U_{\theta_1}=U_{\theta_2}$ for some $\theta_2>\theta_1$ on $\partial B_R$. Then there holds
\begin{equation}
\label{5.7}
\int_{\mathbb{R}^2\setminus B_R}e^{U_{\theta_2}}dx\leq\int_{\mathbb{R}^2\setminus B_R}e^{\psi}dx\leq \int_{\mathbb{R}^2\setminus B_R}e^{U_{\theta_1}}dx.
\end{equation}
Moreover if the inequality in \eqref{5.6} is strict in a set with positive measure in $(R,+\infty)$, then the inequalities in \eqref{5.7} are also strict.}
\medskip

We shall also need the following lemma.

\medskip
\noindent{\bf Lemma 5.E.} \cite{gm} {\em Assume that $\psi\in C^{0,1}(\overline{B_R})$ is a strictly decreasing and radial function satisfying \eqref{5.4} for a.e. $r\in(0,R)$. If
\begin{equation*}
\int_{B_R}e^{\psi}dx=\int_{B_R}e^{U_{\theta}}dx<8\pi,
\end{equation*}
then $U_{\theta}(R)\leq\psi(R).$}
\medskip

Now we are ready to prove Theorem \ref{th5.1}.
\medskip

\noindent {\em Proof of Theorem \ref{th5.1}.} Without loss of generality, we may assume $\int_{\Omega}g(u_2)dx\geq\int_{\Omega}g(u_1)dx.$ Let
\begin{equation*}
\label{5.8}
v_i=u_i+\log\lambda-\log\int_{\Omega}e^{u_i}dx,~i=1,2.
\end{equation*}
Then $v_i$ ($i=1,2$) satisfy the following equation
\begin{equation}
\label{5.9}
\Delta v_i+e^{v_i}=g(u_i),~i=1,2,
\end{equation}
where $\int_{\Omega}e^{v_1}=\int_{\Omega}e^{v_2}=\lambda.$ It is not difficult to see that $v_1\neq v_2.$ Indeed, if $v_1\equiv v_2$, then we get $g(u_1)=g(u_2)$ by \eqref{5.9}. Combined with the fact $g(u)$ is strictly increasing, we get $u_1=u_2$ and contradiction arises. Thus, $v_1\neq v_2.$

Set
\begin{equation*}
\label{5.10}
t_{i}=\inf_{\Omega}(v_2-v_1)\quad \mathrm{and}\quad t_{s}=\sup_{\Omega}(v_2-v_1).
\end{equation*}
Then we claim that
\begin{equation}
\label{5.11}
\int_{\{v_2-v_1>t\}}(g(u_2)-g(u_1))dx\geq0,~\forall t\in[t_{i},t_{s}].
\end{equation}
We notice that
\begin{equation*}
\int_{\{v_2-v_1>t\}}(g(u_2)-g(u_1))dx=\int_{\{u_2-u_1>t^*\}}(g(u_2)-g(u_1))dx,
\end{equation*}
where
$$t^*=t+\log\int_{\Omega}e^{u_2}dx-\log\int_{\Omega}e^{u_1}dx.$$
If $t^*\geq0$, then there is nothing to prove, while if $t^*<0$, we have
\begin{equation*}
\int_{\{u_2-u_1>t^*\}}(g(u_2)-g(u_1))dx=\int_{\Omega}(g(u_2)-g(u_1))dx-\int_{\{u_2-u_1\leq t^*\}}(g(u_2)-g(u_1))dx\geq0.
\end{equation*}
Thus we have proved the claim.

Next we divide our arguments into two steps.
\medskip

\noindent Step 1. We prove that $\lambda\geq8\pi.$  Suppose $\lambda<8\pi$, we choose $\theta>0$ and $R\in(0,\infty)$ such that
\begin{equation*}
\label{5.12}
\int_{\Omega}e^{v_1}dx=\int_{B_R}e^{U_{\theta}}dx,
\end{equation*}
and let $\phi$ be the symmetrization of $v_2-v_1$ with respect to the measure $e^{v_1}dx$ and $e^{U_{\theta}}dx$. Then it follows from Proposition 5.B and Fubini's theorem that
\begin{equation*}
\label{5.13}
\begin{aligned}
\int_{\{\phi=t\}}|\nabla\phi|d\sigma\leq&\int_{\{v_2-v_1=t\}}|\nabla(v_2-v_1)|d\sigma=-\int_{\{v_2-v_1>t\}}\Delta (v_2-v_1)dx\\
=&\int_{\{v_2-v_1>t\}}(e^{v_2}-e^{v_1})dx-\int_{\{v_2-v_1>t\}}(g(u_2)-g(u_1))dx\\
\leq&\int_{\{v_2-v_1>t\}}(e^{v_2}-e^{v_1})dx=\int_{\{\phi>t\}}e^{U_\theta+\phi}dx
-\int_{\{\phi>t\}}e^{U_{\theta}}dx\\
=&\int_{\{\phi>t\}}e^{U_\theta+\phi}dx-\int_{\{\phi=t\}}|\nabla U_{\theta}|d\sigma,
\end{aligned}
\end{equation*}
for $t\in(t_b,t_s],$ where $t_b=(v_2-v_1)\mid_{\partial\Omega}$ and we used \eqref{5.11}. If $t< t_b$, we find that
$$\partial\{v_2-v_1>t\}=\{v_2-v_1=t\}\cup\partial\Omega.$$
On the other hand, we notice that $v_2-v_1\leq t$ for $t<t_b$ implies that $u_2-u_1\leq 0$, so $g(u_2)-g(u_1)\leq 0$ in $\{v_2-v_1\leq t\}$ for $t<t_b$. Then
\begin{equation*}
\label{5.13b}
\begin{aligned}
\int_{\{\phi=t\}}|\nabla\phi|d\sigma\leq&\int_{\{v_2-v_1=t\}}|\nabla(v_2-v_1)|d\sigma\\
=&-\int_{\{v_2-v_1>t\}}\Delta (v_2-v_1)dx+\int_{\partial\Omega}\partial_{\nu}(v_2-v_1)d\sigma\\
=&\int_{\{v_2-v_1>t\}}(e^{v_2}-e^{v_1})dx+\int_{\{v_2-v_1\leq t\}}(g(u_2)-g(u_1))dx\\
\leq&\int_{\{v_2-v_1>t\}}(e^{v_2}-e^{v_1})dx=\int_{\{\phi>t\}}e^{U_\theta+\phi}dx
-\int_{\{\phi>t\}}e^{U_{\theta}}dx\\
=&\int_{\{\phi>t\}}e^{U_\theta+\phi}dx-\int_{\{\phi=t\}}|\nabla U_{\theta}|d\sigma,
\end{aligned}
\end{equation*}
for $t\in[t_{i},t_{b}),$ where we used
\begin{equation*}
\begin{aligned}
\int_{\partial\Omega}\partial_{\nu}(v_2-v_1)d\sigma&=\int_{\Omega}\Delta(v_2-v_1)dx
=\int_{\Omega}(g(u_2)-g(u_1))dx-\int_{\Omega}(e^{v_2}-e^{v_1})dx\\
&=\int_{\Omega}(g(u_2)-g(u_1))dx.
\end{aligned}
\end{equation*}
Hence
\begin{equation*}
\label{5.14}
\int_{\{\phi=t\}}|\nabla(U_{\theta}+\phi)|d\sigma\leq\int_{\{\phi>t\}}e^{U_{\theta+\phi}}dx
\end{equation*}
for all a.e. $t>t_{i}$. Since $\phi$ is decreasing in $r$, $\psi:=U_{\theta}+\phi$ is strictly decreasing function, and
\begin{equation*}
\label{5.15}
\int_{\partial B_r}|\nabla\psi|d\sigma\leq\int_{B_r}e^{\psi}dx,~\mbox{a.e.}~r\in(0,R).
\end{equation*}
Since $v_1\neq v_2$ and $\int_{\Omega}e^{v_1}dx=\int_{\Omega}e^{v_2}dx$, then $v_2<v_1$ on a subset of $\Omega$ with positive measure. Hence $\phi(R)<0$, therefore
\begin{equation*}
\label{5.16}
\psi(R)=U_{\theta}(R)+\phi(R)<U_{\theta}(R).
\end{equation*}
It is a contradiction by Lemma 5.E., and therefore $\lambda\geq 8\pi.$
\medskip

\noindent Step 2. We prove $\lambda\neq 8\pi$. Suppose $\lambda=8\pi$ and let $\theta_1>0$. From the above arguments we can find there exists $\psi=U_{\theta_1}+\phi\in C^{0,1}(\mathbb{R}^2)$ such that
\begin{equation*}
\label{5.17}
\int_{\Omega}e^{v_1}dx=\int_{\mathbb{R}^2}e^{U_{\theta_1}}dx
=8\pi=\int_{\Omega}e^{v_2}dx=\int_{\mathbb{R}^2}e^{\psi}dx,
\end{equation*}
and
\begin{equation*}
\label{5.18}
\int_{\partial B_r}|\nabla\psi|d\sigma\leq\int_{B_r}e^{\psi}dx~\mbox{for a.e.}~r\in(0,\infty).
\end{equation*}
Since $\int_{\mathbb{R}^2}e^{\psi}dx=\int_{\mathbb{R}^2}e^{U_{\theta_1}}dx$, there exists $r_0\in(0,\infty)$ such that $\psi(r_0)=U_{\theta_1}(r_0)$. Let $\theta_2=\frac{8}{r_0^2\theta_1}$, then it is easy to see that
\begin{equation*}
\label{5.19}
U_{\theta_2}(r_0)=U_{\theta_1}(r_0)=\psi(r_0)
\end{equation*}
by the expression of $U_{\theta}$ in \eqref{5.3}. We notice that $\psi>U_{\theta_1}$ in $B_{r_0}$, it follows from Lemma 5.C that $\theta_1<\theta_2$ and
\begin{equation*}
\int_{B_{r_0}}e^{\psi}dx\geq\int_{B_{r_0}}e^{U_{\theta_2}}dx.
\end{equation*}
While in $\mathbb{R}^2\setminus B_{r_0},$ we have $\psi<U_{\theta_1}$ and it follows from Lemma 5.D that
\begin{equation*}
\int_{\mathbb{R}^2\setminus B_{r_0}}e^{\psi}dx\geq \int_{\mathbb{R}^2\setminus B_{r_0}}e^{U_{\theta_2}}dx.
\end{equation*}
Hence
\begin{equation}
\label{5.20}
8\pi=\lambda=\int_{\mathbb{R}^2}e^{\psi}dx\geq\int_{\mathbb{R}^2}e^{U_{\theta_2}}dx=8\pi.
\end{equation}
Since the solution $u$ of \eqref{5.1} is positive, we have $\Delta v_i+e^{v_i}>0,~i=1,2$, as a consequence, the inequality in \eqref{5.20} is strict by Lemma 5.C and Lemma 5.D. Thus $\lambda\neq8\pi$ and it finishes the proof. \hfill $\square$
\medskip

Before proving Theorem \ref{th.dirichlet}, we shall derive the degree counting formula for \eqref{1.dirichlet} in $H_0^1(\Omega)$ \footnote{$H^1_0(\Omega)=\{u\in H^1(\Omega):u=0~\mathrm{on}~\partial\Omega\}$.}.
\medskip

For any solution of \eqref{1.dirichlet} in $H_0^1(\Omega)$, an inequality of Brezis-Strauss \cite{bs} asserts that (also see \cite[Lemma 2.3]{djlw})
$$\|u\|_{W^{1,q}(\Omega)}\leq C~\mathrm{for}~1<q<2~\mathrm{with~a~constant}~C=C_q>0.$$
We decompose $u=w+v$, where $w,v$ satisfy
\begin{equation*}
\label{5.22}
\Delta w-\beta u=0~\mathrm{in}~\Omega,\quad w=0~\mathrm{on}~\partial\Omega,
\end{equation*}
and
\begin{equation}
\label{5.23}
\Delta v+\lambda\frac{he^v}{\int_{\Omega}he^vdx}=0~\mathrm{in}~\Omega,\quad v=0~\mathrm{on}~\partial\Omega,
\end{equation}
respectively, where $h=e^{w}$. Using the standard elliptic estimate and the fact $u\in W^{1,q}(\Omega)$, we get $w\in W^{3,q}(\Omega)$ and $\|w\|_{C^1(\Omega)}\leq C$. We denote $H=\lambda\frac{h}{\int_{\Omega}he^vdx}$ and it is easy to see that $\|H\|_{C^1(\Omega)}\leq C$. If there is a sequence of solutions $\{u_n\}$ of \eqref{1.dirichlet} with $\lambda=\lambda_n$ such that $\max_{\Omega}u_n\to+\infty$ as $n\to+\infty$, then for equation \eqref{5.23} with $\lambda$ replaced by $\lambda_n$, we get $\max_{\Omega}v_n\to+\infty$ as $n\to+\infty$. By \cite[Theorem 1]{mw}, we have $\lambda_n\to 8m\pi$ for some $m\in\mathbb{N}.$ As a consequence, if $\lambda\neq 8m\pi$, then any solution $v$ of \eqref{5.23} is uniformly bounded, and we can find a constant $C>0$ such that $\|u\|_{L^{\infty}(\Omega)}\leq C$ for any solution of \eqref{1.dirichlet}. Set $T({\lambda})$ to be
\begin{equation*}
\label{5.24}
T(\lambda)=\Delta^{-1}\left(\lambda \frac{e^u}{\int_{\Omega}e^udx}-\beta u\right),
\end{equation*}
which acts in $H^1(\Omega).$ Then the Leray-Schauder degree
\begin{equation*}
d_{\lambda}:=\mathrm{deg}(I+T(\lambda),B_{R},0)
\end{equation*}
is well-defined for $\lambda\neq 8m\pi$ and
$$B_{R}=\{u\in H_0^1(\Omega):\|u\|_{H^1}\leq R\}~\mathrm{with}~R~\mbox{is sufficiently large}.$$
Next, we introduce a homotopy deformation for \eqref{1.dirichlet}
\begin{equation}
\label{5.25}
\begin{cases}
\Delta u-t\beta u+\lambda\frac{e^u}{\int_{\Omega}e^udx}=0~&\mathrm{in}~\Omega,\\
u=0~&\mathrm{on}~\partial\Omega.
\end{cases}
\end{equation}
When $t=1$, equation \eqref{5.25} is \eqref{1.dirichlet}, while if $t=0$, equation \eqref{5.25} is the mean field equation in bounded domain. It is not difficult to see that as $t$ changes from $1$ to $0$, we can argue as above to get that the solution $u_t$ of \eqref{5.25} is uniformly bounded provided $\lambda\neq 8m\pi.$ Therefore the Leray-Schauder degree of the above equation is the same for $t=1$ and $t=0$. Together with the degree formula for the mean field equation in bounded domain (see \cite[Theorem 1.3]{cl}), we get

\begin{theorem}
\label{th5.2}
Let $\Omega$ be a non-empty bounded open set and $8m\pi<\lambda <8(m+1)\pi$ for some positive integer $m.$ Then the Leray-Schauder degree of \eqref{1.dirichlet} $d_{\lambda}=\left(\begin{matrix} m+g-1\\ m\end{matrix}\right)$, where $g$ denotes the number of holes in $\Omega.$ While if $0\leq\lambda<8\pi$, $d_{\lambda}=1.$
\end{theorem}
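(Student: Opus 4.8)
The plan is to compute $d_\lambda$ through homotopy invariance of the Leray--Schauder degree, deforming the full Dirichlet problem \eqref{1.dirichlet} into the pure mean field equation and then invoking the degree formula of Chen--Lin \cite{cl}.

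First I would verify that $d_\lambda=\mathrm{deg}(I+T(\lambda),B_R,0)$ is well-defined whenever $\lambda\neq 8m\pi$. This rests on the uniform bound $\|u\|_{L^\infty(\Omega)}\leq C$ derived above: via the splitting $u=w+v$ the regular part $w$ is controlled in $C^1$ by the Brezis--Strauss estimate, while the possibly concentrating part $v$ solves a mean field equation with smooth, bounded weight $h=e^w$; the blow-up classification of \cite[Theorem 1]{mw} then forces any unbounded sequence of solutions to satisfy $\lambda\to 8m\pi$. Hence for $\lambda$ away from these critical values all solutions lie inside a fixed ball $B_R\subset H_0^1(\Omega)$, so the degree is defined and $d_\lambda$ makes sense.

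Next I would employ the homotopy \eqref{5.25}, replacing $\beta$ by $t\beta$ with $t\in[0,1]$, which interpolates between \eqref{1.dirichlet} at $t=1$ and the mean field equation at $t=0$. The key point --- and the main obstacle --- is to show that the a priori bound persists \emph{uniformly} in $t\in[0,1]$, so that no solution branch reaches $\partial B_R$ during the deformation. I would establish this by repeating the decomposition for each $t$: the term $t\beta u$ contributes a factor $h_t=e^{w_t}$ that is uniformly bounded in $C^1$ and bounded below, so the weights entering the mean field equation for $v_t$ form a compact admissible family, and the blow-up analysis of \cite{mw} applies uniformly. Any concentration along the homotopy would again force $\lambda\to 8m\pi$, which is excluded by hypothesis. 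By the homotopy invariance of the degree, $d_\lambda$ computed at $t=1$ coincides with $d_\lambda$ at $t=0$.

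Finally, at $t=0$ the problem becomes the mean field equation on the bounded domain $\Omega$, whose Leray--Schauder degree was determined in \cite[Theorem 1.3]{cl}: it equals $\binom{m+g-1}{m}$ for $8m\pi<\lambda<8(m+1)\pi$ and equals $1$ for $0\leq\lambda<8\pi$, where $g$ is the number of holes of $\Omega$. Transporting this computation back along the homotopy yields the asserted formula for $d_\lambda$. The heart of the argument is thus the uniform-in-$t$ a priori bound, which guarantees admissibility of the homotopy; once this is secured, the degree count follows directly from the cited mean field results.
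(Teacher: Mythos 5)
Your proposal follows essentially the same route as the paper: the a priori bound via the Brezis--Strauss estimate and the splitting $u=w+v$ with the Ma--Wei blow-up classification ruling out concentration for $\lambda\neq 8m\pi$, then the homotopy \eqref{5.25} deforming $\beta$ to $t\beta$, and finally the Chen--Lin degree formula at $t=0$. The argument is correct and matches the paper's proof in both structure and the key references.
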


Next we prove Theorem \ref{th.dirichlet} by Theorems \ref{th5.1} and \ref{th5.2}.
\medskip

\noindent{\em Proof of Theorem \ref{th.dirichlet}.} First we claim that all the solutions to the equation \eqref{1.dirichlet} are positive. Indeed, if $u$ is not positive in $\Omega$, letting $p$ be the point in $\Omega$ where $u$ obtains its minimal value, then we have $u(p)\leq 0$ and $\Delta u(p)\geq0$.  Therefore
\begin{equation*}
\Delta u-\beta u+\lambda\frac{e^u}{\int_{\Omega}e^udx}>0~\mathrm{at}~p,
\end{equation*}
which gives a contradiction. Hence, the claim is true. Next, it is easy to see that $g(u)=u$ satisfies the condition $g(x),g'(x)>0$ for $x>0.$ Then Theorem \ref{th.dirichlet} follows by Theorems \ref{th5.1} and \ref{th5.2}. Thus we finish the proof. \hfill $\square$
\medskip

\section{Appendix}

\begin{lemma}
\label{le6.1}
Let $u$ be a solution to the boundary value problem (\ref{1.6}) with $\beta>0$. Then
\begin{equation*}
\label{a.2}
u>0~\mbox{in}~\Omega.
\end{equation*}
\end{lemma}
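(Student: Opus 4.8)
The plan is to invoke the strong maximum principle together with the Hopf boundary lemma for the linear operator $L := \Delta - \beta$, whose zeroth-order coefficient $c = -\beta \le 0$ has exactly the sign these principles require. Rewriting the first equation of \eqref{1.6} as
$$Lu = \Delta u - \beta u = -\lambda\frac{e^{u}}{\int_{\Omega}e^{u}dx} < 0 \quad\text{in } \Omega,$$
exhibits every solution $u$ as a strict supersolution of $L$. I would then let $p \in \overline{\Omega}$ be a point realizing the global minimum $m := \min_{\overline{\Omega}} u$ and argue by contradiction, assuming $m \le 0$.

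Two of the three cases are immediate. If $u$ is constant, the equation forces $-\beta u + \lambda/|\Omega| = 0$, hence $u = \frac{\lambda}{\beta|\Omega|} > 0$, contradicting $m \le 0$; so $u$ may be taken non-constant. If the minimum is attained at an interior point $p \in \Omega$, the second-derivative test gives $\Delta u(p) \ge 0$, whereas the equation yields $\Delta u(p) = \beta u(p) - \lambda\frac{e^{u(p)}}{\int_{\Omega}e^{u}dx} \le -\lambda\frac{e^{u(p)}}{\int_{\Omega}e^{u}dx} < 0$ (using $\beta u(p) = \beta m \le 0$), a contradiction. Thus no interior non-positive minimum can occur.

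The main obstacle is the boundary case $p \in \partial\Omega$, where the second-derivative test is unavailable and one must bring in the Neumann condition. Here I would set $w := u - m \ge 0$, so that $w(p) = 0$ and $Lw = Lu + \beta m \le -\lambda\frac{e^{u}}{\int_{\Omega}e^{u}dx} < 0$; hence $w$ is a nonnegative supersolution of $L$ vanishing at the boundary point $p$. Because $u$ is non-constant, the strong maximum principle (applicable since $c = -\beta \le 0$ and the minimal value $0$ is non-positive) forbids $w$ from vanishing in the interior, so $w > 0$ throughout $\Omega$, furnishing the strict interior inequality demanded by Hopf's lemma. Applying the Hopf boundary lemma at $p$ (the interior ball condition holding on a smooth boundary) then gives $\partial_{\nu} w(p) < 0$, whereas $\partial_{\nu} w(p) = \partial_{\nu} u(p) = 0$ by the Neumann boundary condition. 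This contradiction eliminates the last case, so $m > 0$ and therefore $u > 0$ in $\Omega$.
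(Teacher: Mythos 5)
Your proof is correct and follows essentially the same route as the paper: rule out a non-positive interior minimum via the second-derivative test against the sign of $\Delta u$, then rule out a non-positive boundary minimum by combining the strong maximum principle with the Hopf boundary lemma and the Neumann condition. The only cosmetic difference is that you shift to $w=u-m$ and apply Hopf on all of $\Omega$, whereas the paper restricts to a subdomain where $u\le 0$ so that $\Delta u\le 0$ there; both devices serve the same purpose.
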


\begin{proof}
Let
$$a=\min_{x\in\overline\Omega}u(x).$$
We shall prove $a\geq0$ by contradiction. Suppose $a<0$, we first prove the value $a$ can not be obtained by $u$ in $\Omega$,
indeed if
$u(x_0)=a$ with $x_0\in\Omega$, we get
\begin{equation*}
\label{a.4}
\Delta u(x_0)-\beta u(x_0)+\lambda\frac{e^{u(x_0)}}{\int_{\Omega}e^{u}dx}>0,
\end{equation*}
contradiction arises. Hence, $a$ can only be obtained by $u$ on $\partial\Omega$, say at $x_0\in\partial\Omega.$ By the continuity of $u$, we
can always find $\Omega'\subset\Omega$ with $x_0\in\overline\Omega'\cap\overline\Omega$ and
$u(x)\leq0$ in $\Omega'$. As a consequence, we can see that
\begin{equation*}
\label{a.5}
\Delta u=\beta u-\lambda\frac{e^{u}}{\int_{\Omega}e^{u}dx}\leq0~\mbox{in}~\Omega'.
\end{equation*}
By the Hopf boundary lemma, we get $\frac{\partial u}{\partial\nu}(x_0)<0$, which contradicts to the Neumann boundary condition. Thus the
assumption is not true and $a\geq0$. On the other hand, by the above arguments we can also show that $u(x)$ can not reach the value $a$ in $\Omega$ if $a=0$. Therefore $u(x)>0$ in $\Omega$ and the lemma is proved.
\end{proof}
\bigskip

\noindent \textbf{Acknowledgement:} Part of this work was finished when the third author was a postdoctoral fellow in the Department of Applied Mathematics in  the Hong Kong Polytechnic University supported by the project G-YBKT. The research of J. Wang is supported by NSFC of China (Grants 11571140, 11671077), Fellowship of Outstanding Young Scholars of Jiangsu Province (BK20160063). the Six big talent peaks project in Jiangsu Province(XYDXX-015), and NSF of Jiangsu Province (BK20150478).
The research of Z.A. Wang is supported by the Hong Kong GRF grant PolyU 153041/15P. The research of W. Yang  is supported by CAS Pioneer Hundred Talents Program (Y8S3011001).
\medskip

\end{document}